\pgfplotsset{compat=newest}
\newtheorem{cor}{Corollary}
\newtheorem{lem}{Lemma}
\newtheorem{prop}{Proposition}
\newtheorem{rem}{Remark}
\newtheorem{thm}{Theorem}
\newcommand{\umax}{\mu_{1}}
\newcommand{\umin}{\mu_{\sizeA}}
\newcommand{\aw}{\textcolor{black}}
\newcommand{\jt}{\textcolor{black}}
\newcommand{\sizeA}{N}
\newcommand{\resource}{\eta}
\newcommand{\phimat}{\Phi}
\newcommand{\psimat}{\Psi}
\newcommand{\lamalt}{\zeta}
\theoremstyle{definition}
\newcommand{\Dalt}{\Theta}
\newcommand{\poly}{\Omega}
\newcommand{\Lap}{L}
\begin{document}

	\title{Block {Alpha-}Circulant Preconditioners for All-at-Once Diffusion-Based 
		Covariance Operators}

	\author{Jemima M. Tabeart\footnotemark[1] \and Selime G{\"u}rol\footnotemark[2] \and John W. Pearson\footnotemark[3] \and Anthony T. Weaver\footnotemark[2]}

	\date{}
	
	\maketitle
	
	\renewcommand{\thefootnote}{\fnsymbol{footnote}}
	\footnotetext[1]{Department of Mathematics and Computer Science, Eindhoven University of Technology, De Zaale, 5612 AZ, Eindhoven, The Netherlands (\texttt{j.m.tabeart@tue.nl})}
	\footnotetext[2]{CERFACS, 42 Avenue Gaspard Coriolis, 31057 Toulouse Cedex 1, France}
	\footnotetext[3]{School of Mathematics, The University of Edinburgh, James Clerk Maxwell Building, The King’s Buildings, Peter Guthrie Tait Road, Edinburgh, EH9 3FD, United Kingdom }
	
	\begin{abstract}
		{Covariance matrices are central to data assimilation and inverse methods derived from statistical estimation theory. Previous work has considered the application of an all-at-once diffusion-based representation of a covariance matrix operator in order to exploit inherent \jt{parallelism}  in the underlying problem. In this paper, we provide practical methods to apply block $\alpha$-circulant preconditioners to the all-at-once system for the case where the main diffusion operation matrix cannot be readily diagonalized using a discrete Fourier transform. Our new framework applies the block $\alpha$-circulant preconditioner approximately by solving an inner block diagonal problem via a choice of inner iterative approaches.  Our first method applies Chebyshev semi-iteration to a symmetric positive definite matrix, shifted by a complex scaling of the identity. 
			We extend theoretical results for Chebyshev semi-iteration in the symmetric positive definite setting, to obtain computable bounds on the asymptotic convergence factor for each of the complex sub-problems. The second approach transforms the complex sub-problem into a (generalized) saddle point system with real coefficients.     
			Numerical experiments reveal that in the case of unlimited computational resources, both methods can match the iteration counts of the `best-case' block $\alpha$-circulant preconditioner. We also provide a practical adaptation to the nested Chebyshev approach, which improves performance in the case of a limited computational budget. Using an appropriate choice of $\alpha$ our new approaches are robust and efficient in terms of outer iterations and matrix--vector products. } \\
		\textbf{Key words}: {Covariance operator; All-at-once solver; Preconditioned iterative method; Fast Fourier transform; Chebyshev semi-iteration}
	\end{abstract}
	\section{Introduction}
	{Diffusion-based operators can be used to apply a covariance matrix to a vector, thus circumventing the need to explicitly specify a covariance matrix. This makes them particularly convenient for evaluating large covariance matrix--vector products such as those typically required by variational data assimilation algorithms in meteorology and oceanography \cite{weaver_correlation_2016,guillet2019modelling}. Closely related methods have been proposed for solving stationary geophysical inverse problems \cite{buithanh13,tarantola05} and for spatial interpolation in geostatistics \cite{lindgren11,Simpson_2012}. When the diffusion operator is solved implicitly, it approximates a covariance operator for which the kernel is a covariance function from the Whittle--Mat\'ern family \cite{Guttorp_2006,Weaver_2013}.}

	The implicit diffusion-based covariance operator can be defined as a sequence of $\ell$ linear systems, where the solution of one system is used to define the right-hand side of the next system in the sequence. Each linear system involves the same symmetric positive definite (SPD) matrix $A \in \mathbb{R}^{\sizeA\times \sizeA}$, which is constructed from the diffusion operator. The reader is referred to \cite[Section 2]{weaver_correlation_2016} and \cite{weaver2018time} for details. In \cite{weaver2018time}, the authors reformulate the $\ell$-step sequential problem as an ``all-at-once'' problem of dimension $\ell \sizeA \times \ell \sizeA$. For covariance operators in data assimilation, the number of blocks, $\ell$, is usually taken to be even and small (e.g. $\ell\approx 10$
	\cite{weaver2018time, Goux_2024, Chrust_2025}), while the dimension of each block, $\sizeA$, is typically at least several orders of magnitude larger (e.g. $N$ is of the order of $10^6$ for the 2D diffusion operator used in \cite{Chrust_2025}). This is equivalent to solving a system of the form $\mathcal{A}x = b,$ 
	where
	\begin{equation}\label{eq:A}
		\mathcal{A} = \begin{pmatrix}
			{A} \\
			-I_\sizeA & {A} \\
			& \ddots & \ddots \\
			&& -I_\sizeA & {A}
		\end{pmatrix}, \quad x = \begin{pmatrix}
			x_1 \\ x_2 \\ \vdots \\ x_\ell
		\end{pmatrix}, \quad b = \begin{pmatrix}
			b_1 \\ 0\\ \vdots \\ 0
		\end{pmatrix}.
	\end{equation}
	Without loss of generality, we follow the convention of taking $\ell > 2$ to be even within this work (the case $\ell = 2$ is covered by individual portions of our analysis).

	The advantage of solving this system instead of the original sequential system is that matrix--vector products involving the block element $A$ can be applied in parallel when solving the system with an iterative method. However, $\mathcal{A}$ also possesses some challenging structure; it 
	is a {non-diagonalizable} block Toeplitz matrix, and the algebraic expression for $\mathcal{A}^{-1}$ is a lower triangular block matrix involving powers of $A^{-1}$. In this paper, we consider preconditioners that can improve the properties of the preconditioned system compared to the original matrix \eqref{eq:A}.
	
	Prior work in \cite{weaver2018time} tested a simple and inexpensive preconditioner based on replacing $A$ by its diagonal in \eqref{eq:A}. Although this preconditioner can somewhat accelerate convergence, its application, via direct multiplication by a lower triangular block matrix, is not parallelizable across the blocks. 
	A more effective preconditioner is given by the Strang circulant approximation of \eqref{eq:A}, which results in a preconditioned matrix that is diagonalizable and has clustered eigenvalues.
	A parallelizable implementation of the circulant preconditioner for a general parallel-in-time system was proposed in \cite{mcdonald2018preconditioning}, where transforming to Fourier space \aw{in time} allows the preconditioner to be applied blockwise. This work was subsequently extended by \cite{LinNgalphacirculant2021} to consider a block $\alpha$-circulant preconditioner for application to evolutionary partial \jt{differential} equations (PDEs). This, and other similar approaches (see e.g. \cite{bertaccini2003block,hon_sine_2024,li_preconditioned_2024,liu2020fast}), can be applied directly to \eqref{eq:A}. However, as explained later, the full approach proposed in these references is challenging to implement when examining the applications of interest in this study.

	In this paper, we consider how a block $\alpha$-circulant preconditioner 
	can be applied approximately and efficiently, taking into account constraints that would be present in a realistic data assimilation framework such as the one described by \cite{weaver2018time}. The $\alpha$-circulant preconditioner for the problem under consideration is given by 
	\begin{equation}\label{eq:exactPBE}
		\mathcal{P}_{\alpha} = \begin{pmatrix}
			{A} && &-{ \alpha}I_\sizeA \\
			-I_\sizeA & {A} \\
			& \ddots & \ddots \\
			&& -I_\sizeA & {A}
		\end{pmatrix} = I_{\ell} \otimes {A} + C_{\alpha} \otimes(-I_\sizeA)
		~~ \textrm{and}
		~~ C_{\alpha} = 
		\begin{pmatrix}
			0 & & & \alpha \\
			1 & 0 & & \\
			& \ddots & \ddots & \\
			& & 1 & 0
		\end{pmatrix},
	\end{equation}
	where $ C_{\alpha} \in \mathbb{R}^{\ell \times \ell}$ is an {$\alpha$-}circulant matrix and $\otimes$ denotes the Kronecker product.

	\aw{Much previous research on using $\alpha$-circulant matrices as preconditioners assumes that the main blocks (i.e. ${A}$ in this setting) can be diagonalized using a complete basis expansion.}
	For many problems, this is feasible, e.g.~by deploying a Fast Fourier Transform (FFT) or exploiting regularity of grids in the PDE setting. 
	Our new approach is designed to \jt{also} be compatible with an existing implementation of \eqref{eq:A} for the \jt{global} ocean data assimilation system described in \cite{weaver2018time}. 
	\jt{In this setting, ${A}$ cannot be represented in terms of convenient basis functions, such as the spherical harmonics, due to the irregular geometry of the domain, in particular the presence of land masses. Moreover, the large dimension of ${A}$ precludes the use of an explicit, numerically computed eigenvalue--eigenvector expansion.}
	\jt{Therefore, in this work, we develop novel approaches to apply  $\mathcal{P}_{\alpha}^{-1}$ \textit{approximately}, motivated by problems in which a basis transform (such as the FFT) for ${A}$ is not available.}
	In \cite{weaver_correlation_2016}, $\mathcal{A}x = b$ is solved using Chebyshev semi-iteration. The reader is referred to \cite{weaver2018time,weaver_correlation_2016} for a comprehensive discussion of the benefits and suitability of Chebyshev semi-iteration for this application.

	In this paper, we propose new approximations of the preconditioner $\mathcal{P}_{\alpha}$ which avoid applying {{spectral transforms}} in the spatial domain while remaining  computationally affordable.  
	In Section \ref{sec:Background} we apply theory from  \cite{liu2020fast, mcdonald2018preconditioning} to \eqref{eq:A} and show how the majority of the computational work of applying $\mathcal{P}_{\alpha}^{-1}$ reduces to applying the inverse of a block diagonal matrix.  The remainder of the paper proposes two contrasting approaches to approximate the action of $\mathcal{P}_{\alpha}^{-1}$, by solving each of the block inversion problems approximately using an inner iterative approach.  In Section \ref{sec:ApproxPrecond}, we apply Chebyshev semi-iteration (CI) to each of the sub-problems, and study convergence of CI applied to a SPD system shifted by a complex multiple of the identity. In Section \ref{sec:SP}, we reformulate the complex block problem to a real-valued (generalized) saddle point system and propose new preconditioners for this inner problem. 
	We consider the theoretical aspects and practical implementational concerns for both of these approaches.  In Section \ref{sec:Numerics}, we investigate the performance of our  preconditioners for a diffusion problem based on a shifted (negative) Laplacian for different choices of $\alpha$ and in the cases of {{both abundant and limited}} computational resources. Finally, we present our conclusions in Section \ref{sec:Conclusions}.

	\section{{Block $\alpha$-}circulant preconditioning}\label{sec:Background}

	In the case that CI is applied to the preconditioned linear system
	\begin{equation}
		\label{eq:precondsystem}
		\mathcal{P}_{\alpha}^{-1}  \mathcal{A} x = \mathcal{P}_{\alpha}^{-1} b, 
	\end{equation}
	we require estimates of the extreme eigenvalues of the preconditioned matrix $\mathcal{P}_{\alpha}^{-1}  \mathcal{A}$. 
	In Section \ref{sec:EvalsofPrecondSys}, we describe the spectral properties of $\mathcal{P}_\alpha^{-1}\mathcal{A}$. In Section \ref{sec:Kroneckerproductform} we write the Kronecker product form of $\mathcal{P}_\alpha$, such that we may apply an FFT across the $\ell$ blocks. 
	The remainder of the paper describes new methods to approximate the solution of the resulting block-diagonal problem efficiently via nested iterative methods.

	\subsection{Spectral properties of the preconditioned system}\label{sec:EvalsofPrecondSys}
	
	We summarize a number of known spectral results for the block $\alpha$-circulant preconditioner 
	and apply them to our specific problem of interest, \eqref{eq:A}. We begin by proving the invertibility of $\mathcal{P}_{\alpha}$.
	\begin{lem}
		\label{lemma:Zalpha}
		Let $A$ be a symmetric and positive definite matrix of which eigenvalues are given by $\mu_1 \ge \ldots \ge \mu_j \ge \ldots \ge \mu_\sizeA$. Let us assume that $\alpha > 0$ and $\alpha \ne \mu_j^\ell$, $j = 1,\dots,\sizeA$. Then, the matrix defined by 
		$$
		Z_\alpha = \alpha^{-1}(I_\sizeA-\alpha A^{-\ell}),
		$$
		is an invertible matrix. In addition, $\mathcal{P}_{\alpha}$ is an invertible matrix with 
		\begin{align*}
			\mathcal{P}_\alpha^{-1} = \mathcal{A}^{-1} + \mathcal{A}^{-1}E_1Z_\alpha^{-1}E_\ell^{\top}\mathcal{A}^{-1},
		\end{align*}
		where $E_i = e_i\otimes I_N$ and $e_i$ is the $i$-th column of $I_{\ell}$.
	\end{lem}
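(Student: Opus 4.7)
The plan is to prove invertibility of $Z_\alpha$ by a spectral argument, then to recognise that $\mathcal{P}_\alpha$ is a low-rank perturbation of the triangular block Toeplitz matrix $\mathcal{A}$, and finally to apply the Sherman--Morrison--Woodbury identity. The formula for $\mathcal{P}_\alpha^{-1}$ then drops out after identifying the $(\ell,1)$ block of $\mathcal{A}^{-1}$.

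First I would dispatch $Z_\alpha$. Since $A$ is SPD with positive eigenvalues $\mu_1,\dots,\mu_N$, the matrix $A^{-\ell}$ is SPD with eigenvalues $\mu_j^{-\ell}$. Hence $I_N - \alpha A^{-\ell}$ has eigenvalues $(\mu_j^\ell - \alpha)/\mu_j^\ell$, which are nonzero precisely under the assumption $\alpha \neq \mu_j^\ell$. Combining this with $\alpha>0$ gives invertibility of $Z_\alpha$.

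Next I would write $\mathcal{P}_\alpha$ as a rank-$N$ update of $\mathcal{A}$. Comparing \eqref{eq:A} and \eqref{eq:exactPBE}, the only difference is the $-\alpha I_N$ block in position $(1,\ell)$, so using $E_1E_\ell^\top = (e_1 e_\ell^\top)\otimes I_N$ we obtain
\[
\mathcal{P}_\alpha \;=\; \mathcal{A} \,-\, \alpha\, E_1 E_\ell^\top.
\]
Since $\mathcal{A}$ is block lower bidiagonal with the invertible block $A$ on the diagonal, $\mathcal{A}$ itself is invertible, and a straightforward induction on $\ell$ via forward substitution yields the $(i,j)$ block of $\mathcal{A}^{-1}$ as $A^{-(i-j+1)}$ for $i \geq j$ and zero otherwise. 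In particular, $E_\ell^\top \mathcal{A}^{-1} E_1 = A^{-\ell}$.

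Finally I would apply the Sherman--Morrison--Woodbury identity with $U = E_1$, $C = -\alpha I_N$, $V = E_\ell^\top$:
\[
\mathcal{P}_\alpha^{-1} \;=\; \mathcal{A}^{-1} \,-\, \mathcal{A}^{-1} E_1 \bigl(-\alpha^{-1} I_N + E_\ell^\top \mathcal{A}^{-1} E_1\bigr)^{-1} E_\ell^\top \mathcal{A}^{-1}.
\]
The inner factor simplifies to $-\alpha^{-1}(I_N - \alpha A^{-\ell}) = -Z_\alpha$, whose invertibility has already been established, so the two minus signs cancel and the claimed expression follows. The only mildly delicate step is justifying the closed-form block entries of $\mathcal{A}^{-1}$, but this reduces to a short induction; everything else is a direct application of standard tools.
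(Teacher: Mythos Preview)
Your proposal is correct and follows essentially the same route as the paper: a spectral argument for the invertibility of $Z_\alpha$, the identification $\mathcal{P}_\alpha = \mathcal{A} - \alpha E_1 E_\ell^\top$, and an application of Sherman--Morrison--Woodbury together with the relation $E_\ell^\top \mathcal{A}^{-1} E_1 = A^{-\ell}$ (which the paper, like you, justifies by induction). Your version is slightly more explicit about the invertibility of $\mathcal{A}$ and the block structure of $\mathcal{A}^{-1}$, but the argument is otherwise identical.
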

	\begin{proof}
		By the definition of $\alpha$, $Z_{\alpha}$ does not have a zero eigenvalue, which shows that $Z_{\alpha}$ is invertible. 
		Due to the fact that $\mathcal{P}_\alpha$ can be rewritten as $ \mathcal{P}_\alpha = \mathcal{A}- \alpha E_1 E_\ell^{\top}$, 
		by using the Sherman--Morrison--Woodbury formula we obtain
		\begin{align*}
			\mathcal{P}_\alpha^{-1} & =  \mathcal{A}^{-1} + \alpha\mathcal{A}^{-1}E_1(I_\sizeA - \alpha E_\ell^{\top}\mathcal{A}^{-1}E_1)^{-1}E_\ell^{\top}\mathcal{A}^{-1} \\
			& =  \mathcal{A}^{-1} + \mathcal{A}^{-1}E_1Z_\alpha^{-1}E_\ell^{\top}\mathcal{A}^{-1}.
		\end{align*}
		{Here, we use the relation $ E_\ell^{\top}\mathcal{A}^{-1}E_1 = A^{-\ell}$ which can be proved by induction.}
	\end{proof}

	By using Lemma \ref{lemma:Zalpha} and applying the result of  \cite[Theorem 3.2]{LinNgalphacirculant2021} to \eqref{eq:A} and \eqref{eq:exactPBE}, we obtain the following results on the eigenvalues of the preconditioned system.
	
	\begin{thm}\label{thm:evals}
		Let $\mathcal{A} \in \mathbb{R}^{\ell N \times \ell N}$ and $\mathcal{P}_{\alpha} \in \mathbb{R}^{\ell N \times \ell N}$ be defined by \eqref{eq:A} and \eqref{eq:exactPBE} respectively, where $\alpha$ and ${A} \in \mathbb{R}^{N \times N}$ are defined in Lemma \ref{lemma:Zalpha} \jt{(in particular, $\alpha \ne \mu_j^\ell$, $j = 1,\dots,\sizeA$)}. 
		Then the eigenvalues of $\mathcal{P}_{\alpha}^{-1} \mathcal{A}$ are given by $1$ (with multiplicity $(\ell-1) \sizeA$) and 
		\begin{equation*}
			\frac{\mu_j^{\ell}}{\mu_j^{{\ell}} - \alpha}, \quad j = 1, \dots \sizeA.
		\end{equation*}
	\end{thm}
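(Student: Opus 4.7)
My plan is to leverage Lemma \ref{lemma:Zalpha} directly: multiplying the expression for $\mathcal{P}_\alpha^{-1}$ on the right by $\mathcal{A}$ gives the convenient identity
\begin{equation*}
\mathcal{P}_\alpha^{-1}\mathcal{A} = I_{\ell N} + \mathcal{A}^{-1} E_1 Z_\alpha^{-1} E_\ell^{\top}.
\end{equation*}
This exhibits $\mathcal{P}_\alpha^{-1}\mathcal{A}$ as an identity plus an $\ell N \times \ell N$ matrix of rank at most $N$, since $E_1,E_\ell \in \mathbb{R}^{\ell N \times N}$. Hence $1$ is an eigenvalue with multiplicity at least $(\ell-1)N$, which accounts for the first family of eigenvalues in the theorem.

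Next I would identify the remaining (at most) $N$ eigenvalues. The standard fact that for conformable $U,V$ the nonzero eigenvalues of $UV^{\top}$ coincide with those of $V^{\top}U$, applied with $U = \mathcal{A}^{-1}E_1 Z_\alpha^{-1}$ and $V^{\top} = E_\ell^{\top}$, tells us that the nontrivial eigenvalues of $\mathcal{P}_\alpha^{-1}\mathcal{A}$ are the eigenvalues of the $N \times N$ matrix
\begin{equation*}
I_N + E_\ell^{\top} \mathcal{A}^{-1} E_1 \, Z_\alpha^{-1} = I_N + A^{-\ell} Z_\alpha^{-1},
\end{equation*}
where the simplification uses the relation $E_\ell^{\top}\mathcal{A}^{-1}E_1 = A^{-\ell}$ already recorded in the proof of Lemma \ref{lemma:Zalpha}.

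It then remains to substitute $Z_\alpha^{-1} = \alpha(I_N - \alpha A^{-\ell})^{-1}$. Since $A$ is SPD, every matrix appearing is a polynomial in $A^{-1}$ and so they share the orthonormal eigenbasis of $A$. Applying the spectral decomposition eigenvalue-by-eigenvalue gives
\begin{equation*}
1 + \frac{\alpha \mu_j^{-\ell}}{1 - \alpha \mu_j^{-\ell}} = \frac{\mu_j^{\ell}}{\mu_j^{\ell} - \alpha}, \qquad j = 1,\dots,N,
\end{equation*}
which is exactly the second family claimed in the statement; the hypothesis $\alpha \ne \mu_j^\ell$ ensures no division by zero.

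No single step here is a serious obstacle: the proof is essentially a Sherman--Morrison-style rank count followed by a simultaneous diagonalization. The one place to be careful is confirming that the $N$ eigenvalues produced by the reduced $N \times N$ matrix are indeed the ``extra'' ones on top of the $(\ell-1)N$ copies of $1$ (rather than overlapping with them); this is automatic since the algebraic multiplicities of $1$ from the rank bound and of the nontrivial values from the reduction must together sum to $\ell N$, which they do.
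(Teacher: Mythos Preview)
Your proof is correct and follows essentially the same route as the paper: both start from the identity $\mathcal{P}_\alpha^{-1}\mathcal{A} = I_{\ell N} + \mathcal{A}^{-1}E_1 Z_\alpha^{-1} E_\ell^{\top}$ obtained from Lemma~\ref{lemma:Zalpha}, reduce the spectral question to the $N\times N$ block $I_N + A^{-\ell}Z_\alpha^{-1}$, and then read off the eigenvalues via the spectral decomposition of $A$. The only cosmetic difference is that the paper writes out the explicit block upper-triangular form of $\mathcal{P}_\alpha^{-1}\mathcal{A}$ to see the $(\ell-1)N$ unit eigenvalues and the bottom-right block directly, whereas you invoke the $UV^{\top}\!/V^{\top}U$ eigenvalue equivalence to reach the same $N\times N$ reduction.
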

	
	\begin{proof}
		\jt{Applying the result of \cite[Theorem 3.2]{LinNgalphacirculant2021} to \eqref{eq:precondsystem},} the preconditioned system, $ \mathcal{P}_\alpha^{-1}\mathcal{A} = I +\mathcal{A}^{-1}E_1Z_\alpha^{-1}E_\ell^{\top} $, is of the form
		$$\mathcal{P}_\alpha^{-1}\mathcal{A} = \begin{pmatrix}
			I_\sizeA &&&& A^{-1}{Z}_\alpha^{-1}\\
			& I_\sizeA &&& A^{-2}{Z}_\alpha^{-1}\\
			&&\ddots &&\vdots\\
			&&& I_\sizeA & A^{-(\ell-1)}{Z}_\alpha^{-1}\\
			&&&& I_\sizeA+A^{-\ell}{Z}_\alpha^{-1}\\
		\end{pmatrix}.$$ Hence
		$\mathcal{P}_{\alpha}^{-1}\mathcal{A}$ has an eigenvalue at $1$ with multiplicity $(\ell-1) \sizeA$. The remaining eigenvalues are given by those of $I_\sizeA+\alpha{A}^{-\ell} (I_\sizeA-\alpha{A}^{-\ell})^{-1}$, i.e.
		\begin{equation*}
			1 +\alpha  \mu_j^{-\ell}\left(1 - \alpha \mu_j^{-\ell} \right)^{-1} = 1 +\frac{\alpha}{\mu_j^{{\ell}} - \alpha} = \frac{\mu_j^{{\ell}}}{\mu_j^{\ell} - \alpha},
		\end{equation*}
		from which the required result follows.    
	\end{proof}

	\begin{rem}
		\jt{We note that \cite[Theorem 3.2]{LinNgalphacirculant2021} also provides a bound on the eigenvalues of $\mathcal{P}_{\alpha}^{-1}\mathcal{A}$. Specifically, for any $\eta \in (0,1)$, take $\alpha \in (0,\eta]$. Then $\max_{\lambda \in \sigma(\mathcal{P}_\alpha^{-1}\mathcal{A})}|\lambda-1|\le \frac{\alpha}{1-\eta}$. 
			As the computational performance of CI is strongly dependent on the quality of the estimates of the extreme eigenvalues of the preconditioned system,  explicit spectral information is highly beneficial in this context.} 
	\end{rem}
	
	We now show that although the original system $\mathcal{A}$ is non-diagonalizable \cite{weaver2018time}, choosing ${\alpha\ne \mu_j^\ell}$, $\forall j$, guarantees diagonalizability of the preconditioned system.
	\begin{thm}
		Let $\mathcal{A} \in \mathbb{R}^{\ell N \times \ell N}$ and $\mathcal{P}_{\alpha} \in \mathbb{R}^{\ell N \times \ell N}$ be defined by \eqref{eq:A} and \eqref{eq:exactPBE} respectively, where $\alpha$ and ${A} \in \mathbb{R}^{N \times N}$ are defined in Lemma \ref{lemma:Zalpha}.
		Then  $\mathcal{P}_{\alpha}^{-1}\mathcal{A}$ is diagonalizable.
	\end{thm}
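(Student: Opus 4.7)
The plan is to exploit the explicit closed form of $\mathcal{P}_\alpha^{-1}\mathcal{A}$ already derived in the proof of Theorem~\ref{thm:evals}. Partition it as
$$
\mathcal{P}_\alpha^{-1}\mathcal{A} = \begin{pmatrix} I_{(\ell-1)\sizeA} & W \\ 0 & S \end{pmatrix},
\qquad
S := I_\sizeA + A^{-\ell}Z_\alpha^{-1},
$$
where $W \in \mathbb{R}^{(\ell-1)\sizeA \times \sizeA}$ collects the blocks $A^{-i}Z_\alpha^{-1}$, $i=1,\dots,\ell-1$, appearing in the last block column. Diagonalizability of a block upper triangular matrix whose diagonal blocks share no eigenvalue is a standard consequence of the Sylvester equation trick, and this is the route I would take.

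First I would verify that $S$ is diagonalizable. Since $A$ is SPD, write $A = Q\Lambda Q^\top$ with $Q$ orthogonal and $\Lambda = \mathrm{diag}(\mu_1,\ldots,\mu_\sizeA)$. Both $A^{-\ell}$ and $Z_\alpha^{-1} = \alpha(I_\sizeA-\alpha A^{-\ell})^{-1}$ are rational functions of $A$, well-defined by the hypothesis $\alpha \ne \mu_j^\ell$, and share the eigenbasis $Q$. Hence $S = Q\,\mathrm{diag}\!\bigl(\mu_j^\ell/(\mu_j^\ell-\alpha)\bigr)\,Q^\top$ is diagonalizable, with the eigenvalues predicted by Theorem~\ref{thm:evals}.

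Next, observe that the spectrum of the upper-left block $I_{(\ell-1)\sizeA}$, namely $\{1\}$, is disjoint from the spectrum of $S$: because $\alpha > 0$ and $\mu_j > 0$, each eigenvalue $\mu_j^\ell/(\mu_j^\ell-\alpha)$ of $S$ differs from $1$. By the standard Sylvester argument, there is a unique matrix $X$ solving $X S - X = W$, and the similarity transformation $\begin{pmatrix} I & X \\ 0 & I \end{pmatrix}$ conjugates $\mathcal{P}_\alpha^{-1}\mathcal{A}$ into the block diagonal matrix $\mathrm{diag}(I_{(\ell-1)\sizeA}, S)$. Since each diagonal block is diagonalizable, so is $\mathcal{P}_\alpha^{-1}\mathcal{A}$.

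The only substantive step is the disjoint-spectra check underlying the Sylvester reduction; everything else is immediate from the structure already exposed in Theorem~\ref{thm:evals} and from the SPD property of $A$. An alternative route, should one prefer an explicit eigenbasis, is to exhibit $(\ell-1)\sizeA$ linearly independent eigenvectors for eigenvalue $1$ as the set $\{v : v_\ell = 0\}$, and for each eigenpair $(\lambda_j, w_j)$ of $S$ to produce an eigenvector of $\mathcal{P}_\alpha^{-1}\mathcal{A}$ with last block $w_j$ and preceding blocks $u_i = (\lambda_j-1)^{-1} A^{-i} Z_\alpha^{-1} w_j$; the factor $(\lambda_j-1)^{-1}$ is well-defined precisely because $\alpha > 0$, and linear independence with the eigenvalue-$1$ eigenvectors follows by inspection of the last block component.
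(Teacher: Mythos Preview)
Your proof is correct and follows essentially the same approach as the paper: both hinge on the block upper-triangular form of $\mathcal{P}_\alpha^{-1}\mathcal{A}$, the diagonalizability of the $(2,2)$-block $S = I_\sizeA + A^{-\ell}Z_\alpha^{-1}$, and the crucial observation that $1\notin\sigma(S)$ because $\alpha>0$. The only cosmetic difference is framing: the paper (following \cite{LinNgalphacirculant2021}) writes down the explicit eigenvector matrix $\widehat{Q}$ with off-diagonal blocks $W_j = A^{-j}Z_\alpha^{-1}Q(I_\sizeA-\Theta)^{-1}$, whereas you invoke the Sylvester equation $X(S-I)=W$ and then diagonalize the resulting block-diagonal matrix; unpacking your $X = W(S-I)^{-1}$ in the eigenbasis $Q$ of $S$ reproduces exactly the paper's $W_j$, so the two arguments are the same computation in different clothing.
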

	\begin{proof}
		We show the result by providing an explicit diagonalization. 
		Applying the result of \cite[Lemma 3.3]{LinNgalphacirculant2021} we show that we can diagonalize $I_\sizeA + A^{-\ell}Z_\alpha^{-1} = I_\sizeA - \alpha(\alpha I_\sizeA - A^{\ell})^{-1}$. 
		As this matrix is real and symmetric, it is orthogonally diagonalizable, i.e.
		$$I_\sizeA - \alpha(\alpha I_\sizeA - A^{\ell})^{-1} = Q\Dalt Q^{\top},$$ with the diagonal elements of $\Dalt $ given by $\dfrac{\mu_j^{\ell}}{\mu_j^\ell-\alpha}$.
		We note that as $\alpha > 0$,  $1$ is not an eigenvalue of $\Dalt $. 
		
		Following the result of \cite[Theorem 3.4]{LinNgalphacirculant2021}, we use $Q$ and $\Dalt $ to construct the diagonalization of the preconditioned system. We obtain
		$\mathcal{P}_{\alpha}^{-1}\mathcal{A} = \widehat{Q}\widehat{\Dalt }\widehat{Q}^{-1}$ where 
		
		$$\widehat{Q} = \begin{pmatrix} I_\sizeA &&&&{W_1} \\ & I_\sizeA &&& { W_2}\\ &&\ddots && \vdots \\ &&&I_\sizeA & { W_{{\ell}-1} }\\ &&&&-Q
			
		\end{pmatrix},\qquad \widehat{\Dalt } = \begin{pmatrix} I_\sizeA &&&& \\ & I_\sizeA &&& \\ &&\ddots && \\ &&&I_\sizeA \\ &&&&\Dalt 
			
		\end{pmatrix},$$
		with 
		$${W_j} = A^{-j}Z_\alpha^{-1}Q(I_\sizeA-\Dalt )^{-1}, \quad j = 1,\dots,\ell-1.$$
		
		Since $1$ is not an eigenvalue of $\Dalt $, 
		$    W_j$ is well defined. The invertibility of $\widehat{Q}$ is guaranteed by the invertibility of $Q$, yielding $\mathcal{P}_\alpha^{-1}\mathcal{A} = \widehat{Q}\widehat{\Dalt }\widehat{Q}^{-1}$ for any $\alpha\ne \mu_j^{\ell}$, as required.
	\end{proof}
	
	{To solve the preconditioned linear system \eqref{eq:precondsystem} using CI, it is necessary to determine the \textit{extreme eigenvalues of the preconditioned matrix}, $\mathcal{P}_{\alpha}^{-1} \mathcal{A}$. Next, we show that by choosing $\alpha<\mu_N^\ell$, we guarantee that the smallest eigenvalue of $\mathcal{P}_{\alpha}^{-1} \mathcal{A}$ is given by one, and the largest eigenvalue can be easily calculated from $\mu_N$ and $\alpha$.} 
	
	\begin{cor}\label{cor:extremeevals}
		Let the eigenvalues of $A$ be given by \aw{$\mu_{1} \ge \ldots \ge \mu_{\sizeA-1} \ge \mu_\sizeA > 0$} and suppose that $0 < \alpha < \mu_{\sizeA}^\ell$. {Then $\lambda_{\sizeA}(\mathcal{P}_{\alpha}^{-1} \mathcal{A}) = 1 $ and
			$$\lambda_{\max}(\mathcal{P}_{\alpha}^{-1} \mathcal{A}) = \frac{\mu_{\sizeA}^\ell}{\mu_{\sizeA}^\ell-\alpha}.
			$$
		}
	\end{cor}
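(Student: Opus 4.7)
The strategy is to read off the spectrum directly from Theorem~\ref{thm:evals} and then analyse the non-unit eigenvalues as a function of $\mu$.

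First I would recall from Theorem~\ref{thm:evals} that the eigenvalues of $\mathcal{P}_\alpha^{-1}\mathcal{A}$ consist of $1$ (with multiplicity $(\ell-1)N$) together with the $N$ values $\lambda_j := \mu_j^\ell/(\mu_j^\ell - \alpha)$, $j=1,\dots,N$. Under the hypothesis $0 < \alpha < \mu_N^\ell$ and the ordering $\mu_N \le \mu_{N-1} \le \cdots \le \mu_1$, each denominator satisfies $\mu_j^\ell - \alpha > 0$, so every $\lambda_j$ is a well-defined positive real number.

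Next I would rewrite
\begin{equation*}
\lambda_j = 1 + \frac{\alpha}{\mu_j^\ell - \alpha},
\end{equation*}
from which two observations follow. First, because $\alpha > 0$ and $\mu_j^\ell - \alpha > 0$, we have $\lambda_j > 1$ for every $j$; combined with the eigenvalue $1$ of multiplicity $(\ell-1)N$, this yields $\lambda_N(\mathcal{P}_\alpha^{-1}\mathcal{A}) = 1$. Second, viewing the right-hand side as a function of $\mu$ on the interval $\mu^\ell > \alpha$, the map $\mu \mapsto \alpha/(\mu^\ell - \alpha)$ is strictly decreasing in $\mu$, so $\lambda_j$ is maximised when $\mu_j$ is smallest, namely at $\mu_j = \mu_N$. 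Therefore
\begin{equation*}
\lambda_{\max}(\mathcal{P}_\alpha^{-1}\mathcal{A}) = \frac{\mu_N^\ell}{\mu_N^\ell - \alpha},
\end{equation*}
as claimed.

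There is essentially no obstacle here; the proof is a direct consequence of Theorem~\ref{thm:evals} together with the monotonicity of $\mu \mapsto \mu^\ell/(\mu^\ell-\alpha)$ under the stated sign conditions. The only point worth flagging is the role of the hypothesis $\alpha < \mu_N^\ell$: without it some denominator could be non-positive, in which case the corresponding $\lambda_j$ would fail to be both positive and the largest in magnitude, and the clean statement $\lambda_N = 1$ would no longer hold.
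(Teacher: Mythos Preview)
Your proof is correct and follows essentially the same route as the paper: invoke Theorem~\ref{thm:evals}, observe that each non-unit eigenvalue exceeds $1$ under the hypothesis $0<\alpha<\mu_N^\ell$, and use the monotonicity of $\mu\mapsto \mu^\ell/(\mu^\ell-\alpha)$ (equivalently, of your rewritten form $1+\alpha/(\mu^\ell-\alpha)$) to identify the maximum at $\mu=\mu_N$.
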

	\begin{proof}
		{By selecting $\alpha < \umin^\ell$, it is clear that the smallest eigenvalue of 
			$\mathcal{P}_{\alpha}^{-1} \mathcal{A}$ is given by 1.}
		Fixing $\alpha $, the expression $\frac{\overline{\mu}}{\overline{\mu}-\alpha }$ is monotonically decreasing in $\overline{\mu}$ for $\overline{\mu}>\alpha$, with $\lim_{\overline{\mu}\downarrow \alpha} \frac{\overline{\mu}}{\overline{\mu}-\alpha}\rightarrow \infty$ and $\lim_{\overline{\mu} \uparrow \infty} \frac{\overline{\mu}}{\overline{\mu}-\alpha} = 1$. Therefore, if  $\mu^\ell_{\sizeA}>\alpha$, 
		the ordering of the  eigenvalues of 
		$I_N + A^{-\ell}Z_\alpha$ is reversed, i.e.
		$$\frac{\umin^\ell}{\umin^\ell-\alpha}\ge \frac{\mu_{\sizeA-1}^\ell}{\mu_{\sizeA -1}^\ell-\alpha}\ge \ldots \ge \frac{\umax^\ell}{\umax^\ell-\alpha}>1,$$
		with 
		the largest eigenvalue given by $\frac{\umin^\ell}
		{{\umin^\ell}-\alpha}.$
	\end{proof}
	In this paper, we choose $0 < \alpha < \umin^\ell$, which satisfies the assumptions for all the theoretical results in this section. For the case study considered in Section \ref{sec:Numerics}, $\umin>1$ (see Appendix \ref{sec:Append}), meaning we are free to select $\alpha \le 1$.  We investigate different choices of $\alpha$ numerically in Section \ref{sec:Numerics}.

	\subsection{Kronecker product form of the preconditioner}\label{sec:Kroneckerproductform}
	
	A key computational advantage of the block $\alpha$-circulant preconditioner \eqref{eq:exactPBE} is that the dependency between different blocks of $\mathcal{P}_\alpha$ can be decoupled using a discrete Fourier transform (DFT). This permits us to re-write $\mathcal{P}_\alpha^{-1}$ so that the bulk of the computational effort lies in the application of the inverse of a block diagonal matrix to a vector. In this section we discuss this decomposition via the Kronecker structure of $\mathcal{P}_\alpha$.
	
	We exploit the block {$\alpha$-}circulant structure to write $\mathcal{P}_{\alpha}$
	in the following Kronecker product form:
	{
		\begin{align}
			\nonumber
			\mathcal{P}_{\alpha} &= I_{\ell} \otimes {A} + C_{\alpha} \otimes(-I_\sizeA)\\
			\nonumber
			& = I_{\ell} \otimes {A} + \Gamma_{\alpha}^{-1}U\Lambda U^*\Gamma_{\alpha} \otimes(-I_\sizeA)\\
			& = (\Gamma_{\alpha}^{-1}U\otimes I_\sizeA) (I_{\ell}\otimes {A} - \Lambda \otimes I_\sizeA)(U^*\Gamma_{\alpha}\otimes I_\sizeA).
			\label{eq:palpha}
		\end{align}
		Here, $\Gamma_{\alpha} = \textrm{diag}(1,\alpha^{1/\ell},\alpha^{2/\ell},\dots,\alpha^{(\ell-1)/\ell})$ is a diagonal scaling matrix made up of powers of $\alpha$, $\Lambda \in \mathbb{R}^{\ell\times \ell}$ is a diagonal matrix of scaled {$\ell$-th} roots of unity, i.e. 
		\begin{equation*}
			\lambda_j = \alpha^{1/\ell}\exp \left(\frac{2\pi i (j-1)}{\ell}\right), {\quad j=1, \ldots, \ell},
		\end{equation*}
		and the $j$-th column of $U$ is given by 
		$$U_j = \frac{1}{\sqrt{\ell}}\left(1,\exp\left(\frac{2\pi i(j-1)}{\ell}\right), \exp\left(\frac{2\pi i(2(j-1))}{\ell}\right),\dots,\exp\left(\frac{2\pi i(\ell-1)(j-1)}{\ell}\right)\right)^{\top},$$
		i.e. Fourier modes.  In the case where $\alpha=1$, we have $\Gamma_1=I_{\ell}$ and the diagonal entries of $\Lambda$ are {$\ell$-th} roots of unity. If $\alpha\ne1$, we refer to $\lambda_j$ as scaled ({$\ell$-th}) roots of unity.

		The terms in the outer brackets in expression \eqref{eq:palpha} are trivial to invert using the properties of Kronecker products and the definitions of $U$, $\Gamma_{\alpha}$.
		The inner matrix is a block diagonal matrix with $j$-th block ${A}-\lambda_j I_\sizeA$. Therefore, the inverse of $\mathcal{P}_{\alpha}$ is given by
		\begin{equation}\label{eq:Paform}
			\mathcal{P}_{\alpha}^{-1} =(\Gamma_{\alpha}^{-1}U\otimes I_\sizeA) (I_{\ell}\otimes {A} - \Lambda \otimes I_\sizeA)^{-1}(U^*\Gamma_{\alpha}\otimes I_\sizeA).
		\end{equation}
		
		As $A$ and $I_\sizeA$ are simultaneously diagonalizable, recalling that $A$ is SPD and hence admits the decomposition ${A} = X\Phi X^\top$, we can go one step further and write
		\begin{equation*}
			\mathcal{P}_{\alpha} 
			= (\Gamma_{\alpha}^{-1}U\otimes I_\sizeA)(I_{\ell}\otimes X)(I_{\ell}\otimes\Phi  - \Lambda \otimes I_\sizeA)(I_{\ell} \otimes X^\top)(U^*\Gamma_{\alpha}\otimes I_\sizeA).
		\end{equation*}
		
		In this formulation, applying $\mathcal{P}_{\alpha}^{-1}$ requires the inversion of a diagonal matrix $\Gamma_{\alpha}$, which is computationally trivial. 
		In much of the parallel-in-time literature (see e.g. \cite{hon_sine_2024,li_preconditioned_2024,mcdonald2018preconditioning,StollM2014Osoa}), applications are examined for which obtaining a decomposition ${A} = X\Phi X^{\top}$ is feasible, for instance via a spatial fast Fourier transform (FFT).
		However, for some applications, including the motivating diffusion-based covariance problem,  a spatial FFT-based scheme is not always readily available. 
		In the remainder of this work, we therefore consider computationally feasible methods to apply the block $\alpha$-circulant preconditioner approximately for the setting where a DFT or full eigendecomposition of ${A}$ is unavailable. {For this reason, we focus on the formulation of $\mathcal{P}_{\alpha}^{-1}$ expressed by \eqref{eq:Paform}, which does not require the decomposition of $A$. Instead, this requires matrix--vector products with the inverse of $(I_{\ell}\otimes {A} - \Lambda \otimes I_\sizeA)$, which can be approximately applied to a vector by  solving linear systems:}
		\begin{equation}
			\label{eq:full_blockproblem}
			(I_{\ell}\otimes {A} - \Lambda \otimes I_\sizeA)x = b.
		\end{equation}
		{Noting that the matrix $(I_{\ell}\otimes {A} - \Lambda \otimes I_\sizeA)$ is a block diagonal matrix, the solution of the linear system \eqref{eq:full_blockproblem} can hence be obtained by solving each linear system:
			\begin{equation}
				\label{eq:blockproblem}
				({A} - \lambda_j I_\sizeA) x_j = b_j, \quad j = 1,\dots, \ell,
			\end{equation}
			in parallel, where $x_j$ and $b_j$ are $N$-dimensional vectors.}

		{We consider two approaches for the solution of linear systems given by \eqref{eq:blockproblem}}. Our first approach, presented in Section \ref{sec:ApproxPrecond}, applies CI to the (possibly complex) inner problems \eqref{eq:blockproblem}. The second approach, in Section \ref{sec:SP}, reformulates \eqref{eq:blockproblem} as a (generalized) saddle point system taking only real values, and applies MINRES \cite{PaigeSaunders} with an appropriate choice of preconditioner to solve each reformulated inner problem.  For each of the methods we present a theory of convergence for the inner problem. In Section \ref{sec:Numerics} we consider the numerical performance of the overall preconditioners for a realistic setting when each of the sub-problems \eqref{eq:blockproblem} are only solved approximately using a small, fixed number of iterations.

		\section{Applying the preconditioner via nested Chebyshev semi-iteration}\label{sec:ApproxPrecond}
		In this section we consider how to apply the preconditioner \eqref{eq:Paform} approximately by using CI to solve each of the block problems \eqref{eq:blockproblem}. For $A\in\mathbb{R}^{\sizeA \times \sizeA}$ SPD with eigenvalues in $[\mu_{N},\mu_1]$, let us denote 
		\begin{equation}\label{eq:B definition}
			B_j = {A} - \lambda_j I_\sizeA, \quad {j = 1, \ldots, \ell}.
		\end{equation}
		
		As $\ell$ is taken to be even in this work, $\lambda_{\ell/2+1} = -\alpha^{1/\ell}$, and {$\lambda_1 = \alpha^{1/\ell}$}. The remaining $\lambda_j$ have non-zero complex part with $\lambda_j = \overline{\lambda}_{\ell+2-j}$, $j =\{2,3, \ldots, \ell/2 \}$, and $\Re(\lambda_j)<\Re(\lambda_k)$ for $1\le k < j\le \ell/2+1$ or $\ell/2+1\le j<k\le \ell$ (where $\Re(\lambda)$ denotes the real part of $\lambda$). An example of this labelling convention is illustrated for $\ell =10$ in Figure \ref{fig:l=10 example}.
		
		\begin{figure}[ht]
			\centering
			\begin{tikzpicture}
				\pgfmathsetmacro{\n}{10}
				\begin{axis}[axis equal,
					axis lines=center,grid=major,
					xlabel=$\Re$,
					xtick={0},
					extra x tick style={x tick label style={above, yshift=0ex,xshift=2.9ex}},
					extra x ticks={-1,1},
					extra x tick labels={$-\alpha^{1/\ell}$,\hspace{-10.7ex}$\alpha^{1/\ell}$},
					ytick={0},
					extra y tick style={y tick label style={above, yshift=0.1ex,xshift=0.3ex}},
					extra y ticks={-1,1},
					extra y tick labels={\hspace*{-1.4ex}$-\alpha^{1/\ell}i$,  \hspace{0.3ex}\vspace{-10.1ex}$\alpha^{1/\ell}i$},
					xmax=1.5,
					xmin=-1.5,
					ymax=1.7,
					ymin=-1.5,
					ylabel=\hspace{1mm}$\Im$,
					samples=10,
					disabledatascaling]
					\draw[help lines, black] (0,0) circle (1);
					\foreach \t in {1,...,\n} {
						\edef\temp{\noexpand
							\node[fill=blue, circle, draw=blue, scale=0.5] at ( {cos((360*\t)/\n)}, {sin((360*\t)/\n)} ) {};
						}\temp}
					\foreach \t in {1,...,\n} {
						\edef\temp{\noexpand
							\node[fill=white, circle, draw=none, scale=0.7,text=blue] at ( {1.25*cos((360*(\t-1))/\n)}, {1.25*sin((360*(\t-1))/\n)} ) {{$\lambda_{\t}$}};             
						}\temp}
					
				\end{axis}
			\end{tikzpicture}
			\caption{Visualization of scaled $10$-th roots of unity on $\mathbb{C}$-plane, with the labelling convention going anti-clockwise from $\lambda_1 = \alpha^{1/\ell}$.}
			\label{fig:l=10 example}
		\end{figure}
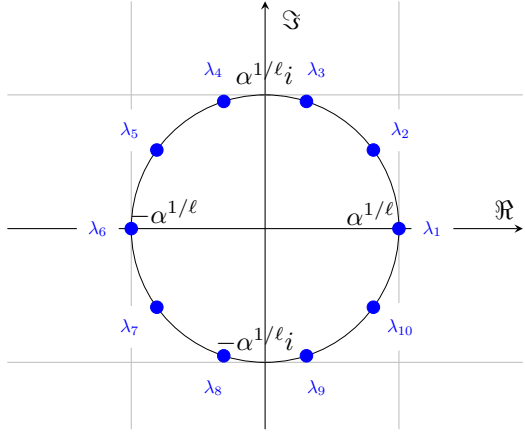
		
		As a result, $B_j \in \mathbb{C}^{\sizeA \times \sizeA}$ has the special structure 
		$$
		B_j = A - (\Re(\lambda_j) + i \;\Im(\lambda_j) )I_N
		$$
		where $\Im(\lambda)$ denotes the imaginary part of $\lambda$ and $i^2 = -1$.
		Since $A$ is SPD, $B_j$ are normal matrices for $j=1, \ldots, \ell$. Hence, it is unitarily diagonalizable, i.e. there exists a unitary matrix $V_j$ such that 
		\begin{equation}
			\label{eq:Bdecomp}
			B_j = V_j \Xi_j V_j^{\ast},
		\end{equation}
		with $\Xi_j = \textrm{diag}(\xi_{j}^{(1)}, \xi_{j}^{(2)}, \ldots, \xi_{j}^{(\sizeA)})$ a diagonal matrix containing the eigenvalues of $B_j$. 
		\begin{lem}\label{lem:evals}
			The eigenvalues of $B_j$, given by $\xi_j^{(k)}$ for $k = 1,\ldots, \sizeA$, $j = 1,\dots,\ell$, have imaginary part given by 
			$\Im(\lambda_j)$ and $\xi_j^{(k)} \in S_j := [\umin - \Re(\lambda_j) - i \; \Im(\lambda_j), \umax - \Re(\lambda_j) - i \;\Im(\lambda_j)]$, where $S_j$ is a line segment parallel to the real axis.\end{lem}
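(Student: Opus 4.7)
The proof is essentially a direct computation exploiting the spectral decomposition of $A$. The plan is to reduce the eigenvalue problem for $B_j$ to that of $A$ (which we understand completely by assumption), and then simply track where the real interval $[\mu_N,\mu_1]$ maps in the complex plane under the shift by $-\lambda_j$.

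First, I would invoke the SPD assumption on $A$ to write an orthogonal eigendecomposition $A = X\Phi X^\top$, where $X\in\mathbb{R}^{N\times N}$ is orthogonal and $\Phi=\mathrm{diag}(\mu_1,\ldots,\mu_N)$. Since $I_N$ trivially shares this eigenbasis,
$$B_j = A - \lambda_j I_N = X\bigl(\Phi - \lambda_j I_N\bigr)X^\top,$$
so $B_j$ is similar to a diagonal matrix and its eigenvalues are exactly $\xi_j^{(k)} = \mu_k - \lambda_j$ for $k=1,\ldots,N$. (This is consistent with the unitary diagonalization \eqref{eq:Bdecomp} stated just before the lemma, taking $V_j = X$ and $\Xi_j = \Phi - \lambda_j I_N$.)

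Next I would split each eigenvalue into real and imaginary parts: writing $\lambda_j = \Re(\lambda_j) + i\,\Im(\lambda_j)$, we get
$$\xi_j^{(k)} = \bigl(\mu_k - \Re(\lambda_j)\bigr) - i\,\Im(\lambda_j),$$
so the imaginary part is the constant $-\Im(\lambda_j)$, independent of $k$, while the real part varies in $[\mu_N - \Re(\lambda_j),\, \mu_1 - \Re(\lambda_j)]$ as $k$ ranges over $1,\ldots,N$. Hence all $\xi_j^{(k)}$ lie on the horizontal segment $S_j$ defined in the statement, which is parallel to the real axis by construction.

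There is essentially no hard step here — the only subtlety is bookkeeping the sign of the imaginary part (the segment endpoints in the statement carry $-i\,\Im(\lambda_j)$, while the phrase ``imaginary part given by $\Im(\lambda_j)$'' should be read modulo sign), and checking that the $\mu_k$ being real is what forces the spectrum onto a line rather than a more general region of $\mathbb{C}$. Both are immediate consequences of $A$ being SPD.
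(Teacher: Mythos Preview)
Your proof is correct and follows essentially the same route as the paper: both simply observe that the eigenvalues of $B_j$ are the shifts $\mu_k-\lambda_j$, split into real and imaginary parts, and read off the horizontal segment $S_j$. Your version is slightly more explicit in invoking the eigendecomposition $A=X\Phi X^\top$, and your remark about the sign of the imaginary part is a fair observation on the statement's wording, but the argument is the same.
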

		\begin{proof}
			The eigenvalues of $B_j$ are given by $ \mu_k - \Re(\lambda_j) - i\;\Im(\lambda_j) := \xi_j^{(k)} = \Re(\xi_j^{(k)}) - i\;\Im(\lambda_j) $ for $k = 1,\dots,N$.
			Hence, all eigenvalues $\xi_j^{(k)}$ (for fixed $j$) lie on the line segment $S_j := [\umin - \Re(\lambda_j) - i\;\Im(\lambda_j), \umax - \Re(\lambda_j) - i\;\Im(\lambda_j)]$.
		\end{proof} 
		Note that, for $j' \in \{1, \ell/2 + 1\}$, 
		$\Im(\lambda_{j'}) = 0$. Hence, $B_{j'}$ has real eigenvalues. As $\umin > {\alpha}^{1/\ell}$,  $ B_{j'} (= {A}\pm {\alpha^{1/\ell}}I_\sizeA)$ is symmetric positive definite for both real roots, with the eigenvalues of the perturbed system lying on the positive real axis, i.e. 
		\begin{align*}
			\xi_1^{(k)} & \in S_1 = [\umin - \alpha^{1 / \ell}, \umax- \alpha^{1 / \ell}], \\ 
			\xi_{\ell/2 + 1}^{(k)} & \in S_{\ell/2 + 1} = [\umin + \alpha^{1 / \ell}, \umax + \alpha^{1 / \ell}], 
		\end{align*}
		for $k = 1, \ldots, \sizeA$.

		We next explore how to use CI to solve a linear system involving a normal matrix $B_j$. {Note that for non-normal matrices, Chebyshev semi-iteration would not be guaranteed to converge in generality \cite{manteuffel1978}. Hence, the normality of the matrix $B_j$ is crucial to our conclusions.}
		
		Given an initial guess $\chi_0$ for a true solution $\chi^{\ast}$, a polynomial-based iterative method produces, at iteration $p$, an estimate $\chi_p$ such that the error $e_p = \chi^{\ast} - \chi_p$ can be written as 
		\begin{equation*}
			{e}_p = \poly_p(B_j) {e}_0,
		\end{equation*}
		with $\poly_p(B_j)$ denoting a polynomial of degree at most $p$ such that $\poly_p(0) = 1$.  
		Consequently, assuming non-zero initial error, we have
		\begin{equation}\label{ep_error}
			\frac{\| {e}_p \|_2}{\| {e}_0 \|_2} \le \|  \poly_p(B_j) \|_2.
		\end{equation}

		From the spectral decomposition of $B_j$ given by \eqref{eq:Bdecomp}, we have
		\begin{align}
			\nonumber
			\|  \poly_p(B_j) \|_2 = \|  \poly_p(\Xi_j) \|_2 ={}& \text{spec} ( \poly_p(\Xi_j) ) \\
			\label{eq:norm_bound} ={}& \max_{k = 1, \ldots, \sizeA} |  \poly_p(\mu_k - \Re(\lambda_j) - i\; \Im(\lambda_j))  | \le \max_{{z} \in S_j} |  \poly_p( {z})|,
		\end{align}
		where $\text{spec}(\cdot)$ denotes the spectral radius of a matrix. 
		Each polynomial-based method uses a different criterion to choose the polynomial $\poly_p$.  It is well known \cite[Sec 5.3 and Appendix B]{axelsson1996iterative} that when $S_j$ consists solely of positive real eigenvalues, CI selects $\poly_p$ as the shifted-and-scaled Chebyshev polynomials. These solve the minimax problem \cite[Appendix A3]{weaver_correlation_2016}:
		\begin{equation}
			\label{Prob_minmax}
			\min_{\substack{ \text{deg}(\poly) \le p \\ \poly_p(0)  = 1} }  \; \max_{z \in S_j} |  \poly_p( z)  |.
		\end{equation}
		
		In the case that $S_j\subset \mathbb{R}$, the Chebyshev polynomials are constructed based on the knowledge of the extreme eigenvalues. 
		When the set $S_j$ includes complex eigenvalues, we define an ellipse, $E_j$ such that $S_j\subseteq E_j$ and $0\notin E_j$ \cite{gutknecht2002chebyshev, manteuffel1977tchebychev, gergelits2014composite}. Then the minimax problem is solved over $E_j$ \cite{axelsson1996iterative}, and the Chebyshev polynomials are constructed based on the parameters of the ellipse. The computational performance of CI depends on the choice of $E_j$, which is not uniquely defined. For some special cases of $S_j$, $E_j$ can be selected a priori to ensure good numerical performance of a polynomial-based iterative method. For example, the case where $S_j$ is a line segment of the complex plane which is both parallel to the imaginary axis and symmetric about the real axis is studied in \cite{freund1986class}. For this case the authors provide an explicit solution to the minimax problem \eqref{Prob_minmax}, and show that the optimal polynomials are \textit{not} given by the Chebyshev polynomials. In our problem setting, $S_j$ are line segments parallel to the real axis, so the results of \cite{freund1986class} cannot be applied directly. In Section \ref{sec:ComplexLambda} we derive similar results, showing that the polynomials that solve the minimax problem \eqref{Prob_minmax} are distinct from the Chebyshev polynomials. However, these optimal polynomials do not lead to a practical algorithm. We hence propose an alternative method based on CI, for which we can prove bounds on the asymptotic convergence factor in terms of only the real parts of $S_j$.

		Based on the properties presented in this section, we next study the asymptotic convergence of CI applied to \eqref{eq:blockproblem} for two different cases:
		\begin{itemize}
			\item $\lambda_j\in\mathbb{R}$ (Section \ref{sec:RealLambda}),
			\item $\lambda_j \in \mathbb{C}\setminus \mathbb{R}$ (Section \ref{sec:ComplexLambda}).
		\end{itemize}

		\subsection{Convergence of inner problem with real perturbation}\label{sec:RealLambda}
		
		In this section, we focus on the case where $B_j$ has real eigenvalues, i.e. $j' \in \{1, \ell/2 + 1\}$. In this case, CI using the shifted-and-scaled Chebyshev polynomials solves the minimax problem \eqref{Prob_minmax}. 
		When CI is applied to an SPD matrix, the number of iterations required to reach a desired tolerance, $\epsilon$, can be calculated in advance \cite{axelsson1996iterative} as stated by the following theorem:
		
		\begin{thm}[\cite{axelsson1996iterative}, Section 5.3]\label{thm:AxelssonIteration}
			Let $M$ be symmetric positive definite, with eigenvalues in $[\upsilon_{\min},\upsilon_{\max}]$. The CI method applied to $M\chi = \beta$ converges to a given tolerance, $\epsilon$, in $\lceil p^* \rceil$ iterations, where
			\begin{equation*}
				p^* = \frac{\ln\left(\frac{1}{\epsilon}+ \sqrt{\frac{1}{\epsilon^2}-1}\right)}{\ln\left(\frac{1+\sqrt{\upsilon_{\min}/\upsilon_{\max}}}{1-\sqrt{\upsilon_{\min}/\upsilon_{\max}}}\right)}.
			\end{equation*}
		\end{thm}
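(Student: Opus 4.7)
The plan is to invoke the standard Chebyshev minimax machinery, translate the worst-case error reduction into a value of a Chebyshev polynomial of the first kind $T_p$ evaluated outside $[-1,1]$, express $T_p$ there via its hyperbolic representation, and finally solve for $p$. Since $M$ is SPD, the inequalities \eqref{ep_error}--\eqref{eq:norm_bound} apply with $S_j$ replaced by $[\upsilon_{\min},\upsilon_{\max}]\subset\mathbb{R}_{>0}$, and the minimax problem \eqref{Prob_minmax} is solved by the shifted-and-scaled Chebyshev polynomial
\begin{equation*}
\poly_p(z) = \frac{T_p\!\left(\frac{\upsilon_{\max}+\upsilon_{\min}-2z}{\upsilon_{\max}-\upsilon_{\min}}\right)}{T_p\!\left(\frac{\upsilon_{\max}+\upsilon_{\min}}{\upsilon_{\max}-\upsilon_{\min}}\right)}.
\end{equation*}

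Next I would use the fact that $|T_p|\le 1$ on $[-1,1]$, so that on the interval $[\upsilon_{\min},\upsilon_{\max}]$ the worst case is attained at the endpoints and $\max_{z\in[\upsilon_{\min},\upsilon_{\max}]}|\poly_p(z)| = 1/T_p(\kappa)$ with $\kappa := (\upsilon_{\max}+\upsilon_{\min})/(\upsilon_{\max}-\upsilon_{\min})>1$. Combined with \eqref{ep_error}, convergence to tolerance $\epsilon$ is guaranteed once $T_p(\kappa)\ge 1/\epsilon$.

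For $\kappa>1$, I would employ the hyperbolic identity $T_p(\cosh\theta)=\cosh(p\theta)$ with $\theta=\mathrm{arccosh}(\kappa)=\ln(\kappa+\sqrt{\kappa^2-1})$, so that the condition $T_p(\kappa)\ge 1/\epsilon$ becomes $p\,\theta \ge \mathrm{arccosh}(1/\epsilon)=\ln\!\big(1/\epsilon + \sqrt{1/\epsilon^2-1}\big)$. The key algebraic simplification is
\begin{equation*}
\kappa+\sqrt{\kappa^2-1} \;=\; \frac{\upsilon_{\max}+\upsilon_{\min}+2\sqrt{\upsilon_{\min}\upsilon_{\max}}}{\upsilon_{\max}-\upsilon_{\min}} \;=\; \frac{\sqrt{\upsilon_{\max}}+\sqrt{\upsilon_{\min}}}{\sqrt{\upsilon_{\max}}-\sqrt{\upsilon_{\min}}} \;=\; \frac{1+\sqrt{\upsilon_{\min}/\upsilon_{\max}}}{1-\sqrt{\upsilon_{\min}/\upsilon_{\max}}},
\end{equation*}
obtained by recognizing the numerator as $(\sqrt{\upsilon_{\max}}+\sqrt{\upsilon_{\min}})^2$ and dividing through by $\sqrt{\upsilon_{\max}}$. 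Substituting this into $p\ge \ln(1/\epsilon+\sqrt{1/\epsilon^2-1})/\theta$ and taking the ceiling yields $\lceil p^*\rceil$ as stated.

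Most of this is textbook manipulation; the only mild subtlety is justifying that the optimal polynomial for the constrained minimax problem is indeed the shifted Chebyshev polynomial (a Chebyshev equioscillation argument), and the algebraic rewriting of $\kappa+\sqrt{\kappa^2-1}$ into the condition-number form. Both are standard, so I would either cite the derivation in \cite{axelsson1996iterative} directly or, if self-contained exposition is desired, note these as the two points requiring verification. No other step presents a real obstacle.
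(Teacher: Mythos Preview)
Your derivation is correct and follows the classical route: reduce the error to the minimax value of the shifted Chebyshev polynomial, use the hyperbolic representation $T_p(\cosh\theta)=\cosh(p\theta)$ for arguments outside $[-1,1]$, and simplify $\kappa+\sqrt{\kappa^2-1}$ into the condition-number form. Each step checks out, including the factorization $\upsilon_{\max}+\upsilon_{\min}+2\sqrt{\upsilon_{\min}\upsilon_{\max}}=(\sqrt{\upsilon_{\max}}+\sqrt{\upsilon_{\min}})^2$ and the cancellation against $\upsilon_{\max}-\upsilon_{\min}=(\sqrt{\upsilon_{\max}}-\sqrt{\upsilon_{\min}})(\sqrt{\upsilon_{\max}}+\sqrt{\upsilon_{\min}})$.

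Note, however, that the paper does not actually prove this theorem: it is stated with a direct citation to \cite[Section 5.3]{axelsson1996iterative} and used as a quoted result. So there is no ``paper's own proof'' to compare against. Your write-up supplies the standard textbook argument that the citation points to, which is entirely appropriate if a self-contained exposition is wanted; otherwise, simply citing the reference (as the paper does) suffices.
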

		The result of Theorem \ref{thm:AxelssonIteration} can be used to compare the performance of CI applied to $B_1$ and $B_{\ell/2+1}.$ 
		For a fixed choice of $\epsilon$, we can write
		\begin{equation*}
			p^*_1 \; {\ln(1/\sigma_1)} = p^*_{\ell/2+1} \ln(1/\sigma_{\ell/2+1}),
		\end{equation*}
		where
		$\sigma_1 = \frac{\sqrt{\kappa(
				{B_1})}-1}{\sqrt{\kappa({B_1})}+1}$,  
		$\sigma_{\ell/2+1} = \frac{\sqrt{\kappa({B_{\ell/2+1})}}-1}{\sqrt{\kappa({B_{\ell/2+1})}}+1}$, $p^*_1$ and $p^*_{\ell/2+1}$ denote the computed $p^*$ with the relevant eigenvalue shifts, and 
		$\kappa(\cdot)$ denotes the condition number of a SPD matrix. 
		As $\kappa(B_1) > \kappa(B_{\ell/2+1})$, it is clear that $1>\sigma_{1}>\sigma_{\ell/2+1}$ and hence $p^*_1 > p^*_{\ell/2+1}$.
		
		This can be interpreted as follows: to achieve the same tolerance, applying CI to $B_1$ requires roughly a factor of $\frac{\ln(\sigma_{\ell/2+1})}{\ln(\sigma_1)}$ more iterations than applying CI to $B_{\ell/2+1}$. Alternatively, applying the same number of iterations of CI to both problems results in a smaller tolerance on the solution of $B_{\ell/2+1}$ than on the solution of $B_1$. We exploit this relation in Section \ref{sec:Numerics} to accelerate the convergence when using the nested CI preconditioner in the case of a limited computational budget.
		
		\subsection{Convergence of inner problem with complex perturbation}\label{sec:ComplexLambda}
		We now show that CI can also be applied to \eqref{eq:blockproblem} for the complex scaled roots of unity, $\lambda_j$, $j\in[2,\ell/2]\cup[\ell/2+2,\ell]$, with guaranteed upper bounds on the asymptotic convergence factor. In this setting $B_j$ is no longer symmetric positive definite, so the theoretical results from Section \ref{sec:RealLambda} do not hold. However, we can exploit the particular spectral structure of $B_j$ to obtain similar results for complex values of~$\lambda_j$ by selecting the limit case $E_j = S_j$. We now show that CI is guaranteed to converge when applied to $B_j$, and compute an upper bound on the error at each iteration.

		\begin{prop}\label{prop:errorbound}
			Let $B_j$ be defined as in \eqref{eq:B definition}. 
			Then CI converges when applied to $B_j$, with  the error at the $p$-th iteration being bounded above by \begin{equation*}
				\frac{\| {e}_p \|_2}{\| {e}_0 \|_2} \le {\left| T_p \left(\frac{\xi_j^{(1)}+\xi_j^{(N)}}{\xi_j^{(1)}-\xi_j^{(N)}}\right) \right|}^{-1}.\end{equation*}
		\end{prop}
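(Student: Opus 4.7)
The plan is to construct the CI polynomial via a complex affine change of variables that maps the line segment $S_j$ onto the real interval $[-1,1]$, then apply the standard supremum bound $|T_p|\le 1$ on $[-1,1]$ together with the inequality \eqref{eq:norm_bound}. Concretely, set $m = (\xi_j^{(1)}+\xi_j^{(N)})/2$ and $r = (\xi_j^{(1)}-\xi_j^{(N)})/2$ and define
\[
\poly_p(z) = \frac{T_p\bigl((z-m)/r\bigr)}{T_p(-m/r)}.
\]
This is a polynomial of degree $p$ with $\poly_p(0)=1$, and it is the polynomial generated by CI when the endpoint parameters are taken to be $\xi_j^{(N)}$ and $\xi_j^{(1)}$.

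Next, I would invoke Lemma \ref{lem:evals}: every $\xi_j^{(k)}$ shares the common imaginary part $-\Im(\lambda_j)$, so $m$ has the same imaginary part while $r = (\mu_1-\mu_N)/2$ is real. Consequently, for any $z = \xi_j^{(k)}\in S_j$ the quantity $(z-m)/r$ is purely real and lies in $[-1,1]$, hence $|T_p((z-m)/r)|\le 1$. Combining this with the bound $\|e_p\|_2/\|e_0\|_2 \le \max_{z\in S_j}|\poly_p(z)|$ from \eqref{ep_error}--\eqref{eq:norm_bound} yields $\|e_p\|_2/\|e_0\|_2 \le 1/|T_p(-m/r)|$. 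Using the parity $T_p(-x) = (-1)^p T_p(x)$ converts $|T_p(-m/r)|$ into $|T_p(m/r)|$, giving the bound stated in the proposition.

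Convergence itself then follows because $m/r$ has non-zero imaginary part $-2\Im(\lambda_j)/(\mu_1-\mu_N)$, so it lies strictly off the real interval $[-1,1]$. In particular $T_p(-m/r)$ never vanishes (all zeros of $T_p$ are real) and, via the Joukowski representation $T_p(z) = \tfrac{1}{2}\bigl((z+\sqrt{z^2-1})^p + (z+\sqrt{z^2-1})^{-p}\bigr)$, $|T_p(m/r)|$ grows geometrically in $p$, driving the upper bound to zero.

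The main obstacle is not algebraic but conceptual: one must be confident that the CI polynomial is genuinely $\poly_p$ as defined above, even though the reference endpoints lie off the real axis. The three-term Chebyshev recurrence is purely algebraic and carries over to complex scalars without modification, and the normality of $B_j$ established earlier in the section, which gives $\|\poly_p(B_j)\|_2 = \max_k|\poly_p(\xi_j^{(k)})|$, ensures that the spectral-norm analysis reduces to the scalar estimate on $S_j$ used above. The remaining delicate point is to verify that $T_p(-m/r)\ne 0$ so that $\poly_p$ is well-defined; this is immediate from the fact that $-m/r\notin\mathbb{R}$.
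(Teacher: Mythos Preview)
Your argument is correct and follows essentially the same route as the paper: define the CI residual polynomial as the shifted-and-scaled Chebyshev polynomial, observe that for $z\in S_j$ the argument $(z-m)/r$ is real and lies in $[-1,1]$ (because all $\xi_j^{(k)}$ share the same imaginary part and $r=(\mu_1-\mu_N)/2$ is real), bound the numerator by $1$, and read off the stated estimate. The paper writes the polynomial with the sign convention $T_p((m-\mu)/r)/T_p(m/r)$, which coincides with your $\poly_p$ via the parity identity you invoke, and it obtains the maximum by evaluating at $\mu=\xi_j^{(N)}$ rather than appealing to $|T_p|\le 1$ on $[-1,1]$ directly; the content is identical.

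One small caveat: your convergence justification (``$m/r$ has non-zero imaginary part'') only covers the complex shifts $j\notin\{1,\ell/2+1\}$. For the two real shifts one has $m/r\in\mathbb{R}$ with $m/r>1$ (since $\mu_N>\alpha^{1/\ell}$), so $|T_p(m/r)|\to\infty$ by the same Joukowski argument; you should mention this case to match the generality of the proposition, which the paper explicitly notes applies to both real and complex $\lambda_j$.
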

		
		\begin{proof}
			The CI method takes $\poly_p$ in \eqref{ep_error} to be the shifted-and-scaled Chebyshev polynomial, i.e.
			\begin{equation*}
				{\poly_p(\mu) = \frac{T_p \left( \frac{\xi_j^{(1)} + \xi_j^{(N)} - 2 \mu}{\xi_j^{(1)} - \xi_j^{(N)}} \right)}{T_p \left( \frac{\xi_j^{(1)} + \xi_j^{(N)}}{\xi_j^{(1)} - \xi_j^{(N)}} \right)} =}  \frac{T_p \left(\frac{\umax+\umin-2\lambda_j-2\mu}{\umax-\umin}\right)}{T_p \left(\frac{\umax+\umin-2\lambda_j}{\umax-\umin}\right)},
			\end{equation*}
			where $\xi_j^{(1)}$ and $\xi_j^{(N)}$ are the eigenvalues of $B_j$ with largest and smallest real part, respectively, and $\mu \in \mathbb{C}$.
			Hence,
			\begin{align*}
				\max_{\mu \in [{\xi_j^{(\sizeA)},\xi_j^{(1)}}]} |  \poly_p( \mu) | ={}& \max_{\mu \in [{\xi_j^{(\sizeA)},\xi_j^{(1)}}]} \frac{\left| T_p \left(\frac{\umax+\umin-2\lambda_j-2\mu}{\umax-\umin}\right) \right|}{\left| T_p \left(\frac{\umax+\umin-2\lambda_j}{\umax-\umin}\right) \right|} \\
				={}& \frac{1}{\left|T_p \left( \frac{\xi_j^{(1)} + \xi_j^{(N)}}{\xi_j^{(1)} - \xi_j^{(N)}}  \right)\right|},
			\end{align*}
			by setting the argument {$\mu = \umin- \lambda_j$} to obtain the last line. {Using this relation, together with \eqref{ep_error} and \eqref{eq:norm_bound},
				we obtain the desired result.}
		\end{proof}
		We note that Proposition \ref{prop:errorbound} applies to both complex and real values of $\lambda_j$. For $\lambda_{j'}$, $j'\in \{1,\ell/2+1\}$, the Chebyshev polynomial, $T_p$, is evaluated at a real value as $\xi_{j'}^{(1)}+\xi_{j'}^{(N)}\in\mathbb{R}$, whereas for $j \in [2,\ell/2]\cup[\ell/2+2,\ell]$, it is evaluated at a complex value as $\xi_{j}^{(1)}+\xi_{j}^{(N)}\in\mathbb{C}$.

		{The next lemma provides an upper bound
			on the asymptotic convergence {factor}, $\rho(\lambda_j)$, of the shifted system $B_j$ when using a complex shift value, $\lambda_j$.}

		\begin{lem}[\cite{axelsson1996iterative}, p.\,187]\label{lem:asymptoticrate}
			The asymptotic convergence factor of CI applied to $B_j$ is bounded via
			\begin{align}
				\nonumber \rho(\lambda_j) \leq \lim_{p \rightarrow \infty} \, \left\{ \max_{\mu \in [\xi_j^{(\sizeA)},\xi_j^{(1)}]} \, 
				\frac{\left| T_p \left(\frac{{ \xi_j^{(1)}+\xi_j^{(N)} -2\mu}}{\xi_j^{(1)}-\xi_j^{(N)}}\right) \right|}{\left| T_p \left(\frac{\xi_j^{(1)}+\xi_j^{(N)}}{\xi_j^{(1)}-\xi_j^{(N)}}\right) \right|} \right\}^{1/p} 
				= \frac{1}{\lim_{p\rightarrow \infty} \left|T_p \left(\frac{\xi_j^{(1)}+\xi_j^{(N)}}{\xi_j^{(1)}-\xi_j^{(N)}}\right)\right|^{1/p}}.
			\end{align}
			
		\end{lem}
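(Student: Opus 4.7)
The plan is to deduce the bound as a direct consequence of Proposition \ref{prop:errorbound} together with a standard computation of the growth rate $|T_p(z_0)|^{1/p}$ for complex $z_0$, where I write $z_0 = (\xi_j^{(1)}+\xi_j^{(N)})/(\xi_j^{(1)}-\xi_j^{(N)})$ throughout. First I would recall that the asymptotic convergence factor is defined by $\rho(\lambda_j) = \limsup_{p\to\infty}\bigl(\|e_p\|_2/\|e_0\|_2\bigr)^{1/p}$, so the bound of Proposition \ref{prop:errorbound} immediately yields
\[
\rho(\lambda_j) \;\le\; \limsup_{p\to\infty}\left|T_p(z_0)\right|^{-1/p}.
\]
The middle expression in the lemma statement simply reconstructs this bound before the maximum over $\mu\in S_j$ has been evaluated; its equality with the rightmost expression is precisely the internal step used in the proof of Proposition \ref{prop:errorbound}.

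For that equality, I would separate the factors: the denominator $|T_p(z_0)|$ does not depend on $\mu$ and pulls out of the max, while the numerator is $\max_{\mu\in S_j}\bigl|T_p((\xi_j^{(1)}+\xi_j^{(N)}-2\mu)/(\xi_j^{(1)}-\xi_j^{(N)}))\bigr|$. By Lemma \ref{lem:evals}, $S_j$ is parallel to the real axis, so $\xi_j^{(1)}-\xi_j^{(N)}=\umax-\umin \in \mathbb{R}_{>0}$, and as $\mu$ traverses $S_j$ the argument of $T_p$ in the numerator traverses the real interval $[-1,1]$. The classical bound $|T_p(x)|\le 1$ for $x\in[-1,1]$ (with equality at the endpoints) then gives $\max_{\mu\in S_j}|T_p(\cdot)|=1$. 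Raising to the $1/p$-th power and passing to the limit recovers the rightmost expression.

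Finally, I would verify that $\lim_{p\to\infty}|T_p(z_0)|^{1/p}$ exists (and is finite), which simultaneously upgrades the $\limsup$ to a $\lim$. I would use the closed-form representation
\[
T_p(z_0) \;=\; \tfrac{1}{2}\bigl[(z_0+\sqrt{z_0^2-1})^p + (z_0-\sqrt{z_0^2-1})^p\bigr],
\]
choosing the branch of the square root so that $|z_0+\sqrt{z_0^2-1}|\ge 1 \ge |z_0-\sqrt{z_0^2-1}|$; the dominant root then gives $\lim_{p\to\infty}|T_p(z_0)|^{1/p} = |z_0+\sqrt{z_0^2-1}|$.

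The main obstacle is showing that this dominant-root quantity strictly exceeds $1$, so that the resulting bound on $\rho(\lambda_j)$ is meaningful. The key observation is that for $\lambda_j\in\mathbb{C}\setminus\mathbb{R}$, the numerator of $z_0$ inherits the nonzero imaginary part $-2\,\Im(\lambda_j)$ while the denominator is real by Lemma \ref{lem:evals}, so $\Im(z_0)\ne 0$ and hence $z_0\notin[-1,1]$. The standard growth estimate for Chebyshev polynomials off the real interval $[-1,1]$ then delivers $|z_0+\sqrt{z_0^2-1}|>1$. With this in hand, the chain of (in)equalities in the statement follows directly from Proposition \ref{prop:errorbound} and the geometry of $S_j$ recorded in Lemma \ref{lem:evals}.
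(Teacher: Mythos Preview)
Your argument is correct, and in fact the paper does not supply its own proof of this lemma at all: it is simply cited from \cite[p.\,187]{axelsson1996iterative}. Your reconstruction via Proposition~\ref{prop:errorbound} and Lemma~\ref{lem:evals} is exactly the natural route---the max over $\mu\in S_j$ collapses to $1$ because the argument of $T_p$ in the numerator ranges over $[-1,1]$, and the denominator limit is handled by the closed-form $T_p(z_0)=\tfrac12[(z_0+\sqrt{z_0^2-1})^p+(z_0-\sqrt{z_0^2-1})^p]$, which is precisely the representation the paper invokes later in the proof of Theorem~\ref{thm:convergence}.
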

		
		In the next result, we compute a slightly weaker bound that only uses the real part of~$\lambda_j$. 
		
		\begin{thm}\label{thm:convergence}
			Let $B_j$ be defined as in \eqref{eq:B definition}. We denote by  $\rho(\lambda_j)$ the
			{asymptotic convergence factor at the point $\lambda_j$ for  CI} applied to $B_j$ with endpoints $ [\xi_{j}^{(N)},\xi_{j}^{(1)}]$. This {asymptotic convergence factor for the complex value $\lambda_j$} can be bounded by the {asymptotic convergence factor for its real part $\Re(\lambda_j)$}, i.e. 
			\begin{equation}\label{eq:real rate of conv}
				\rho(\lambda_j) < \rho(\Re(\lambda_j)) = {\sigma_j},
			\end{equation}
			{with $\sigma_j = \frac{\sqrt{\kappa(\Re(B_j))}-1}{\sqrt{\kappa(\Re(B_j))}+1}$}.
		\end{thm}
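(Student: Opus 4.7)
The plan is to combine Lemma~\ref{lem:asymptoticrate} with the standard asymptotic for $|T_p(z)|^{1/p}$ off the real axis, and then reduce the claim to a geometric inequality about Bernstein ellipses. First, using Lemma~\ref{lem:evals}, the imaginary parts of $\xi_j^{(1)}$ and $\xi_j^{(N)}$ cancel in the denominator, giving $\xi_j^{(1)} - \xi_j^{(N)} = \mu_1 - \mu_N > 0$. The argument appearing in Lemma~\ref{lem:asymptoticrate} therefore splits as
$$w_j := \frac{\xi_j^{(1)}+\xi_j^{(N)}}{\xi_j^{(1)}-\xi_j^{(N)}} = w_j^{\Re} + i\beta_j, \quad w_j^{\Re} = \frac{\mu_1+\mu_N-2\Re(\lambda_j)}{\mu_1-\mu_N}, \quad \beta_j = -\frac{2\,\Im(\lambda_j)}{\mu_1-\mu_N},$$
where $w_j^{\Re}$ is precisely the value of the argument that would arise for the real shift $\Re(\lambda_j)$. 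Using $0<\alpha<\mu_N^{\ell}$ (Corollary~\ref{cor:extremeevals}) I would verify $w_j^{\Re}>1$, while $\beta_j\neq 0$ since $\lambda_j\in\mathbb{C}\setminus\mathbb{R}$.

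Next, I would invoke the classical representation $T_p(z) = \tfrac12\bigl[(z+\sqrt{z^2-1})^p + (z-\sqrt{z^2-1})^p\bigr]$, taking for $z\notin[-1,1]$ the branch with $|z+\sqrt{z^2-1}|>1$, so that $\lim_{p\to\infty}|T_p(z)|^{1/p}=|z+\sqrt{z^2-1}|$. Substituting into Lemma~\ref{lem:asymptoticrate} yields
$$\rho(\lambda_j) \leq \frac{1}{|w_j + \sqrt{w_j^{2}-1}|}, \qquad \rho(\Re(\lambda_j)) = \frac{1}{w_j^{\Re} + \sqrt{(w_j^{\Re})^{2}-1}},$$
the second identity being exact because the real-shift case corresponds to CI on an SPD matrix, for which the standard bound is tight. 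The theorem then reduces to the strict inequality $|w_j + \sqrt{w_j^{2}-1}| > w_j^{\Re} + \sqrt{(w_j^{\Re})^{2}-1}$.

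The main step, and the one I expect to require the most care, is this geometric comparison. The level curves $\{z:|z+\sqrt{z^{2}-1}|=r\}$ for $r>1$ are Bernstein ellipses with foci at $\pm 1$ and semi-axes $a=(r+r^{-1})/2$, $b=(r-r^{-1})/2$. Setting $r_{0}=w_j^{\Re}+\sqrt{(w_j^{\Re})^{2}-1}$ gives $a=w_j^{\Re}$ and $b=\sqrt{(w_j^{\Re})^{2}-1}$, so the real point $w_j^{\Re}$ is the right vertex of this ellipse. The perturbed point $w_j=w_j^{\Re}+i\beta_j$ then satisfies $(w_j^{\Re}/a)^{2}+(\beta_j/b)^{2}=1+(\beta_j/b)^{2}>1$, placing it strictly outside this ellipse and hence on a Bernstein ellipse of larger parameter, yielding the required strict inequality.

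Finally, I would identify $\sigma_j$ by elementary algebra: writing $\kappa=\kappa(\Re(B_j))=(\mu_1-\Re(\lambda_j))/(\mu_N-\Re(\lambda_j))$ gives $w_j^{\Re}=(\kappa+1)/(\kappa-1)$ and $\sqrt{(w_j^{\Re})^{2}-1}=2\sqrt{\kappa}/(\kappa-1)$, so $w_j^{\Re}+\sqrt{(w_j^{\Re})^{2}-1}=(\sqrt{\kappa}+1)/(\sqrt{\kappa}-1)$ and hence $\rho(\Re(\lambda_j))=(\sqrt{\kappa}-1)/(\sqrt{\kappa}+1)=\sigma_j$. The obstacle I foresee is the clean justification of the Bernstein-ellipse identification, including a careful choice of branch for $\sqrt{w_j^{2}-1}$ so that the Joukowski-type representation applies uniformly to both $w_j$ and $w_j^{\Re}$; all remaining computations are routine.
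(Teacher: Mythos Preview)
Your proposal is correct and follows the same overall skeleton as the paper: both invoke Lemma~\ref{lem:asymptoticrate}, use the asymptotic $\lim_{p\to\infty}|T_p(z)|^{1/p}=|z+\sqrt{z^2-1}|$ for $z\notin[-1,1]$, and reduce the theorem to the strict inequality
\[
\bigl|w_j+\sqrt{w_j^{2}-1}\bigr| \;>\; w_j^{\Re}+\sqrt{(w_j^{\Re})^{2}-1},
\qquad w_j=w_j^{\Re}+i\beta_j,\ \ w_j^{\Re}>1,\ \ \beta_j\neq 0,
\]
after which the identification $\rho(\Re(\lambda_j))=\sigma_j$ is the same elementary calculation you give.

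The only genuine difference is how this core inequality is established. The paper proceeds purely algebraically: it bounds $|w_j+\sqrt{w_j^{2}-1}|^2$ below by $(\Re(w_j+\sqrt{w_j^{2}-1}))^2$, writes out the real part of $\sqrt{w_j^{2}-1}$ explicitly via the standard formula for $\Re\sqrt{c+id}$, and reduces everything to an inequality of the form $\sqrt{(a^2+b^2-1)^2+4a^2b^2}>a^2+b^2-1$, which is immediate. Your Bernstein-ellipse argument replaces this computation by the geometric observation that $w_j^{\Re}$ is the right vertex of the level curve $\{|z+\sqrt{z^2-1}|=r_0\}$ and that $w_j$, lying vertically above that vertex, is strictly outside the ellipse. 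This is cleaner and more conceptual, and it makes the monotonicity in $|\Im(\lambda_j)|$ transparent; the paper's route, by contrast, avoids any appeal to the Joukowski parametrization and needs only the positive-real-part square-root convention, so it sidesteps the branch-tracking issue you flag as the main obstacle. Either approach is entirely adequate here.
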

		\begin{proof}
			Following Lemma \ref{lem:asymptoticrate}, it suffices to show that
			$$
			\lim_{p\rightarrow \infty} \left|T_p \left(\frac{\xi_j^{(1)}+\xi_j^{(N)}}{\xi_j^{(1)}-\xi_j^{(N)}}\right)\right|^{1/p}  > \lim_{p\rightarrow \infty} \left|T_p \left(\frac{\Re(\xi_j^{(1)}+\xi_j^{(N)})}{\xi_j^{(1)}-\xi_j^{(N)}}\right)\right|^{1/p}.
			$$

			{For a complex-valued $x$, 
				the Chebyshev polynomial $T_p$ can be defined as~\cite{gutknecht2002chebyshev}:
				$$
				T_p(x) = \frac{1}{2} \left [ \left( x + \sqrt{x^2 -1} \right)^p + \left( x - \sqrt{x^2 -1} \right)^p \right].
				$$
				By applying the convention of taking the square root with positive real part~\cite[Section 1.1]{bak2010complex} for $x=\frac{\xi_j^{(1)}+\xi_j^{(N)}}{\xi_j^{(1)}-\xi_j^{(N)}} = a+ ib $ with $a > 0$,
				we obtain
				$$\left | x + \sqrt{x^2 -1} \right | > |x| > 1,$$
				which leads to
				$$\lim_{p\rightarrow\infty} \left|T_p(x)\right|^{1/p} = \left|x+\sqrt{x^2-1}\right|.$$ Using this result, it remains to show that
				\begin{equation}
					\label{Eq:inequality_complex_real}
					\left|x  + \sqrt{x^2-1}\right|  >  \left|\Re(x)  + \sqrt{\Re(x)^2-1}\right| = \left|a  + \sqrt{a^2-1}\right|. 
				\end{equation}
				Note that
				\begin{align*}
					\left | x + \sqrt{x^2 -1} \right |^2 & > \left( \Re \left( x + \sqrt{x^2 -1} \right) \right)^2 \\
					& = \left( a + \sqrt{\frac{\sqrt{(a^2 - b^2 - 1)^2 + 4a^2b^2} + (a^2-b^2-1)}{2} } \right)^2,
				\end{align*}
				where the square root is expressed in terms of its positive real part. Therefore to prove the bound~\eqref{Eq:inequality_complex_real} it suffices to show that
				\begin{equation*}
					\sqrt{(a^2 - b^2 - 1)^2 + 4a^2b^2} + (a^2-b^2-1)> 2 (a^2 - 1)
				\end{equation*}
				since $|a| > 1$. This inequality simplifies to 
				$$
				\sqrt{(a^2 + b^2 - 1)^2 + 4a^2b^2} > a^2 + b^2 - 1,
				$$
				which clearly holds, hence proving~\eqref{Eq:inequality_complex_real}. Therefore, $\rho(\lambda_j) < \rho(\Re(\lambda_j))$ and we can use  \cite[Section 5.3]{axelsson1996iterative} to obtain 
				$$\rho(\lambda_j) < \rho(\Re(\lambda_j)) = \sigma_j.$$
			}
		\end{proof}

		We can draw a number of conclusions from the bound \eqref{eq:real rate of conv}:
		\begin{itemize}
			\item A key advantage of the result of Theorem \ref{thm:convergence} compared to the result of Lemma \ref{lem:asymptoticrate} is that \eqref{eq:real rate of conv} is readily computable, only requiring knowledge of the extreme eigenvalues of $A$, and $\Re(\lambda_j)$.
			\item The upper bound for $\rho(\lambda_j)$, specifically $\sigma_j$, has its analogous term appearing in Theorem~\ref{thm:AxelssonIteration}. Motivated by this observation, in Section \ref{sec:Numerics} we use \eqref{eq:real rate of conv} to design a resource-allocation heuristic based on the approach discussed after Theorem \ref{thm:AxelssonIteration}.
			\item As $\ell$ is taken to be even, scaled roots of unity appear as complex conjugate pairs. The bound on the asymptotic convergence factor \eqref{eq:real rate of conv} gives the same value for complex conjugates $\lambda_j$ and $\overline{\lambda}_j$.
			\item For two scaled roots of unity $\lambda_m$, $\lambda_n$ with $\Re(\lambda_m) < \Re(\lambda_n)$, we have 
			{$\sigma_m  <\sigma_n$}.
			We note that this result holds for the upper bounds on the asymptotic convergence factor rather than the rates of convergence themselves. Similarly, for complex $\lambda_j$, the upper bound for the asymptotic convergence factor is bounded below by the asymptotic convergence factor for ${\lambda_{\ell/2+1}} = -{\alpha^{1/\ell}}$ (see Figure \ref{fig:l=10 example}). 
			\item  The bound in Theorem \ref{thm:convergence} becomes sharper as $\Im(\lambda_j)$ approaches $0$. In particular, the bound is tighter for scaled roots of unity when $\alpha \ll 1$ compared to $\alpha = 1$. In Section \ref{sec:Numerics}, we compare the bound and numerical convergence for different choices of $\alpha$.
		\end{itemize}

		\subsection{{Optimal polynomial for the minimax problem}}
		Unlike $\lambda_1,\lambda_{\ell/2+1}\in\mathbb{R}$, for complex scaled roots of unity the Chebyshev polynomials no longer solve the minimax problem \eqref{Prob_minmax}. 
		Indeed, for the case $p=1$, we  prove that the maximal value obtained for the Chebyshev polynomial on the desired range is strictly larger than for the optimal polynomial, $\poly^*_1$.
		
		\begin{prop}
			Let $B_j$ be given by \eqref{eq:B definition}, \jt{and exclude the redundant case $\Re(\xi_j^{(1)}) = \Re(\xi_j^{(N)})$}. For $p=1$ the maximal value of the optimal polynomial $\poly^*_1$ is strictly smaller than the maximum value of the scaled Chebyshev polynomial, i.e.
			$$
			\max_{\mu \in [\xi_j^{(\sizeA)},\xi_j^{(1)}]}| \poly_1^{\ast}(\mu) |\le {\left| T_p \left(\frac{\xi_j^{(1)}+\xi_j^{(N)}}{\xi_j^{(1)}-\xi_j^{(N)}}\right) \right|}^{-1} = \frac{|\xi_j^{(1)}-\xi_j^{(N)}|}{|\xi_j^{(1)}+\xi_j^{_(N)}|}.
			$$
		\end{prop}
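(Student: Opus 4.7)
The plan is to parametrize the admissible degree-one polynomials and reduce \eqref{Prob_minmax} to a two-point geometric optimization in $\mathbb{C}$. Any $\poly$ with $\deg(\poly)\le 1$ and $\poly(0)=1$ has the form $\poly(\mu)=1+c\mu$ for some $c\in\mathbb{C}$. Since $S_j$ is a line segment parallel to the real axis by Lemma \ref{lem:evals}, I would parametrize $\mu(t)=\xi_j^{(N)}+t(\xi_j^{(1)}-\xi_j^{(N)})$ for $t\in[0,1]$ and observe that
\[
|1+c\mu(t)|^{2}
\]
is a convex quadratic function of $t$, since its leading coefficient is $|c|^{2}(\mu_1-\mu_N)^{2}\ge 0$. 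Consequently the maximum of $|\poly(\mu)|$ over $\mu\in S_j$ is attained at one of the two endpoints $\xi_j^{(N)}$ or $\xi_j^{(1)}$.

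Next I would set $u:=1+c\xi_j^{(N)}$ and $v:=1+c\xi_j^{(1)}$, and eliminate $c$ via $c=(u-1)/\xi_j^{(N)}=(v-1)/\xi_j^{(1)}$. This converts the constraint $\poly(0)=1$ into the single linear equation
\[
\xi_j^{(1)}u-\xi_j^{(N)}v=\xi_j^{(1)}-\xi_j^{(N)}=\mu_1-\mu_N,
\]
so the minimax problem becomes: minimize $\max(|u|,|v|)$ over all pairs $(u,v)\in\mathbb{C}^{2}$ satisfying this affine constraint. By the triangle inequality,
\[
\mu_1-\mu_N=\bigl|\xi_j^{(1)}u-\xi_j^{(N)}v\bigr|\le\bigl(|\xi_j^{(1)}|+|\xi_j^{(N)}|\bigr)\max(|u|,|v|),
\]
giving the lower bound $\max(|u|,|v|)\ge(\mu_1-\mu_N)/(|\xi_j^{(1)}|+|\xi_j^{(N)}|)$. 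I would then exhibit an attaining pair: with $r$ equal to this lower bound, take $u=r\,\overline{\xi_j^{(1)}}/|\xi_j^{(1)}|$ and $v=-r\,\overline{\xi_j^{(N)}}/|\xi_j^{(N)}|$, so that $\xi_j^{(1)}u=r|\xi_j^{(1)}|$ and $-\xi_j^{(N)}v=r|\xi_j^{(N)}|$ are both positive real and sum to $\mu_1-\mu_N$, which is itself positive real because $S_j$ is parallel to the real axis.

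Finally, to compare with the Chebyshev bound, I would rewrite the right-hand side of the proposition as $(\mu_1-\mu_N)/|\xi_j^{(1)}+\xi_j^{(N)}|$ and apply the triangle inequality once more:
\[
|\xi_j^{(1)}+\xi_j^{(N)}|\le|\xi_j^{(1)}|+|\xi_j^{(N)}|,
\]
with equality only if $\xi_j^{(1)}$ and $\xi_j^{(N)}$ lie on a common ray from the origin. Since both endpoints share the imaginary part $-\Im(\lambda_j)$ while having distinct real parts, they are positively proportional precisely when $\Im(\lambda_j)=0$, i.e. in the real-shift case $j\in\{1,\ell/2+1\}$. For complex $\lambda_j$ the inequality is strict, which yields $\max_{\mu\in S_j}|\poly_1^{\ast}(\mu)|<|\xi_j^{(1)}-\xi_j^{(N)}|/|\xi_j^{(1)}+\xi_j^{(N)}|$ as claimed.

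The main delicate point I expect is the attainment step: one must verify that the lower bound supplied by the triangle inequality can be matched by an explicit choice of $(u,v)$ consistent with the affine constraint. The alignment of arguments used above works only because $\xi_j^{(1)}-\xi_j^{(N)}=\mu_1-\mu_N$ is a positive real scalar, and this is exactly where the parallel-to-the-real-axis structure of $S_j$ (Lemma \ref{lem:evals}) enters the argument in an essential way.
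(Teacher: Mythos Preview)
Your argument is correct and arrives at the same optimal value
\[
\max_{\mu\in S_j}|\poly_1^{\ast}(\mu)|=\frac{\mu_1-\mu_N}{|\xi_j^{(1)}|+|\xi_j^{(N)}|}
\]
as the paper, followed by the same triangle-inequality comparison with the Chebyshev bound. The route, however, is genuinely different. The paper simply \emph{cites} the explicit form of $\poly_1^{\ast}$ and its maximum from Opfer and Schober (their Example~5.1 and Eq.~(5.6)), and then carries out the one-line comparison $|\xi_j^{(1)}|+|\xi_j^{(N)}|>|\xi_j^{(1)}+\xi_j^{(N)}|$. You instead \emph{derive} the optimal value from scratch: the convexity reduction to the two endpoints, the affine reparametrization in $(u,v)$, the triangle-inequality lower bound, and the explicit attaining pair together constitute a self-contained proof of the Opfer--Schober result for $p=1$ in this special geometry. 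Unpacking your optimal $(u,v)$ back to $c$ in fact recovers exactly their polynomial, since $|\xi|/\xi=\overline{\xi}/|\xi|$ and $r/(\xi_j^{(1)}-\xi_j^{(N)})=1/(|\xi_j^{(1)}|+|\xi_j^{(N)}|)$.

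What each approach buys: the paper's proof is shorter and defers the work to the literature; yours is elementary and makes transparent \emph{why} the parallel-to-the-real-axis structure of $S_j$ matters (it is precisely what makes $\xi_j^{(1)}-\xi_j^{(N)}$ a positive real scalar, so that the triangle-inequality bound can be attained by aligning $\xi_j^{(1)}u$ and $-\xi_j^{(N)}v$ on the positive real axis). Your treatment of the strictness condition is also slightly sharper than the paper's phrasing: the paper says ``as $\Re(\xi_j^{(1)})\ne\Re(\xi_j^{(N)})$ it is straightforward\ldots'', whereas you correctly isolate that equality in $|\xi_j^{(1)}+\xi_j^{(N)}|\le|\xi_j^{(1)}|+|\xi_j^{(N)}|$ requires the endpoints to be positively proportional, which---given their common nonzero imaginary part---forces $\Im(\lambda_j)=0$.
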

		\begin{proof}
			The optimal polynomial for $p=1$ is given by \cite[Example 5.1, p.\,357]{opfer1984richardson}:
			\begin{equation*}
				\label{opt_pol_OS84_p1}
				\poly_1^{\ast}(\mu) = 1 - \frac{\mu}{|\xi_j^{(1)}| + |\xi_j^{(N)}| } \left( \frac{| \xi_j^{(1)} |}{\xi_j^{(1)}} +  \frac{|\xi_j^{(N)}|}{\xi_j^{(N)}}  \right), \quad \mu \in [\xi_j^{(N)},\xi_j^{(1)}],
			\end{equation*}
			and we have \cite[Eq. (5.6), p.\,358]{opfer1984richardson}
			$$
			\max_{\mu \in [\xi_j^{(N)},\xi_j^{(1)}]} | \poly_1^{\ast}(\mu) | = \frac{| \xi_j^{(1)}-\xi_j^{(N)}| }{ |\xi_j^{(1)}| + | \xi_j^{(N)}|  } < 1. 
			$$
			As $\Re(\xi_j^{(1)})\ne\Re(\xi_j^{(N)})$ it is straightforward to show that $\frac{| \xi_j^{(1)}-\xi_j^{(N)}| }{ |\xi_j^{(1)}| + | \xi_j^{(N)}|  } < \frac{|\xi_j^{(1)}-\xi_j^{(N)}|}{|\xi_j^{(1)}+\xi_j^{(N)}|}$ and hence for $p=1$ the Chebyshev polynomial is strictly non-optimal.
		\end{proof}

		The procedure to derive the optimal degree-$p$ polynomial, $\poly^\ast_p(z)$, is described in \cite[Corollary to Theorem 3.5]{freund1986class}.  Explicit expressions are available for $p=1$ and $p=2$, but for larger values of $p$ computing the extremal points and resulting polynomial requires the solution of a system of $p$ {nonlinear equations}.  This makes it more computationally expensive to use the optimal polynomial in place of the Chebyshev polynomials within an iterative method. Numerical experiments reveal very little difference in convergence for a  variety of choices of $A$ when using Chebyshev polynomials rather than the optimal polynomials. We therefore proceed to use CI as the basis for the numerical case study presented in Section \ref{sec:Numerics}, and use the notation $\mathcal{P}_{NC}$ to refer to general nested Chebyshev preconditioners of this form.

		\section{Applying the preconditioner via MINRES}\label{sec:SP}
		
		We now introduce an alternative approach for preconditioning \eqref{eq:blockproblem}, which involves re-formulating each complex sub-problem as a (generalized) saddle point system of twice the dimension, by writing 
		${x_j = \Re({x_j}) + i \;\Im({x_j})}$,
		${b_j = \Re({b_j}) + i \;\Im({b_j})}$. 
		This allows us to solve a purely real linear system, which can be desirable in some applications. We note that for $\lambda_{j'}$, $j' \in \{1,\ell/2+1\}$ the shifted system \eqref{eq:blockproblem} is already real.
		
		The corresponding saddle point problem for \eqref{eq:blockproblem} is then given by 
		\begin{equation}\label{eq:saddlesystem}
			\underbrace{\begin{pmatrix}
					\Im(\lambda_j)I_\sizeA & {A}-\Re(\lambda_j)I_\sizeA \\
					{A}-\Re(\lambda_j)I_\sizeA & -\Im(\lambda_j)I_\sizeA
			\end{pmatrix}}_{\mathcal{S}}\begin{pmatrix}
				\Re({x_j}) \\ -\Im({x_j})
			\end{pmatrix} =  \jt{\begin{pmatrix}
					-\Im({b_j})\\ \Re({b_j}) 
			\end{pmatrix}},
		\end{equation}
		i.e. featuring a matrix of the form 
		\begin{equation*}
			\begin{pmatrix}
				\phimat & \psimat\\ \psimat & -\phimat
			\end{pmatrix},
		\end{equation*}
		with $\phimat$ and $\psimat$ symmetric positive definite.

		As the matrix $\mathcal{S}$ is indefinite, we can no longer apply CI to solve \eqref{eq:saddlesystem}. We therefore propose applying MINRES with a block diagonal preconditioner. 
		We consider a preconditioner of the form \cite[Section 4.1]{Zulehner}
		\begin{equation}\label{eq:minresprecond}
			\mathcal{P}_{D} =\begin{pmatrix}
				\phimat + \psimat & 0 \\ 0 & \phimat+\psimat
			\end{pmatrix}.
		\end{equation}
		We supply a brief result on the eigenvalues of the preconditioned matrix, also discussed in \cite[Section 4.1]{Zulehner} for instance, as follows:
		\begin{thm}
			The eigenvalues of the preconditioned system $\mathcal{P}_{D}^{-1}\mathcal{S}$ lie in  $\big[-1,-\frac{1}{\sqrt{2}}\big] \cup \big[\frac{1}{\sqrt{2}},1\big]$.
		\end{thm}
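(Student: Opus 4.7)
The plan is to spectrally normalize the preconditioner, exploit the very special Kronecker-like structure of the off-diagonal block, and reduce everything to a family of $2 \times 2$ scalar eigenproblems. First, I would replace $\mathcal{P}_D^{-1}\mathcal{S}$ by its symmetric similarity transform $\mathcal{P}_D^{-1/2}\mathcal{S}\mathcal{P}_D^{-1/2}$, which has the same spectrum. Setting $M=(\phimat+\psimat)^{-1/2}$, $P = M\phimat M$ and $Q = M\psimat M$, the transformed matrix equals
\[
\begin{pmatrix} P & Q \\ Q & -P \end{pmatrix},
\]
where $P, Q$ are symmetric positive semi-definite and, by construction, $P + Q = I_\sizeA$.

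Next, I would exploit the particular structure $\phimat = \Im(\lambda_j) I_\sizeA$ inherited from \eqref{eq:saddlesystem}: since $\phimat$ is a scalar multiple of the identity, $\phimat$ and $\psimat$ commute, hence so do $P$ and $Q$. They are therefore simultaneously orthogonally diagonalizable, $P = V D_P V^{\top}$ and $Q = V D_Q V^{\top}$, with $D_P = \mathrm{diag}(p_k)$, $D_Q = \mathrm{diag}(q_k)$, $p_k, q_k \ge 0$ and $p_k + q_k = 1$ for each $k$. Applying the orthogonal similarity $V \oplus V$ and then a row/column permutation, $\mathcal{P}_D^{-1/2}\mathcal{S}\mathcal{P}_D^{-1/2}$ becomes block diagonal with $2 \times 2$ blocks
\[
\begin{pmatrix} p_k & q_k \\ q_k & -p_k \end{pmatrix}, \quad k=1,\ldots,\sizeA,
\]
each of which has eigenvalues $\pm\sqrt{p_k^2+q_k^2}$.

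Finally, I would close the argument by using the constraint $p_k + q_k = 1$ together with $p_k, q_k \ge 0$: the identity $p_k^2 + q_k^2 = 1 - 2 p_k q_k$ combined with $p_k q_k \in [0, \tfrac{1}{4}]$ (minimised at an endpoint and maximised at $p_k = q_k = \tfrac{1}{2}$) gives $p_k^2 + q_k^2 \in [\tfrac{1}{2},1]$, so every eigenvalue of $\mathcal{P}_D^{-1}\mathcal{S}$ lies in $[-1,-\tfrac{1}{\sqrt{2}}]\cup[\tfrac{1}{\sqrt{2}},1]$, as required. The main obstacle is the commutativity step: without $\phimat = \Im(\lambda_j) I_\sizeA$ one could not diagonalize $P$ and $Q$ simultaneously and the simple $2\times 2$ reduction would fail; fortunately this is automatic in our setting. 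A minor technicality to address is the sign of $\Im(\lambda_j)$: for $j\in\{\ell/2+2,\ldots,\ell\}$ one has $\Im(\lambda_j)<0$, but multiplying the first block row of \eqref{eq:saddlesystem} by $-1$ restores the SPD hypothesis on $\phimat$ without altering the spectrum of $\mathcal{P}_D^{-1}\mathcal{S}$.
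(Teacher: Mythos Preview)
Your proof is correct and takes a genuinely different route from the paper. The paper works directly with the generalized eigenproblem $\mathcal{S}\binom{v}{y} = \lamalt\,\mathcal{P}_D\binom{v}{y}$, eliminates $y$ to obtain the Rayleigh-quotient expression
\[
\lamalt^2 = \frac{v^\top(\psimat + \phimat\psimat^{-1}\phimat)v}{v^\top(\phimat+\psimat)\psimat^{-1}(\phimat+\psimat)v},
\]
and then invokes an external result (\cite[Theorem~4]{pearson2012new}) to conclude $\lamalt^2\in[\tfrac12,1]$. Your argument instead exploits the commutativity of $\phimat$ and $\psimat$ (automatic here since $\phimat$ is a scalar multiple of the identity) to simultaneously diagonalize $P$ and $Q$ and reduce everything to explicit $2\times 2$ scalar blocks with eigenvalues $\pm\sqrt{p_k^2+q_k^2}$, $p_k+q_k=1$. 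The paper's approach is more general---it holds for arbitrary SPD $\phimat,\psimat$, not just commuting ones---whereas yours is fully self-contained and avoids the external citation, at the price of relying on the special form $\phimat=\Im(\lambda_j)I_\sizeA$. One small remark: your sign fix for $\Im(\lambda_j)<0$ is slightly off (left-multiplying only the first block row by $-1$ destroys the symmetric off-diagonal structure and is not a similarity), but in fact no fix is needed: taking $\phimat:=|\Im(\lambda_j)|I_\sizeA$ the $2\times 2$ blocks simply become $\left(\begin{smallmatrix}-p_k & q_k\\ q_k & p_k\end{smallmatrix}\right)$, whose eigenvalues are again $\pm\sqrt{p_k^2+q_k^2}$.
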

		\begin{proof}
			We determine the eigenvalues by considering the eigenproblem:
			\begin{equation*}
				\begin{pmatrix}
					\phimat & \psimat\\
					\psimat & -\phimat
				\end{pmatrix}\begin{pmatrix}
					{v} \\ y
				\end{pmatrix} = \begin{pmatrix}
					\lamalt(\phimat+\psimat){v}\\ \lamalt(\phimat+\psimat)y
				\end{pmatrix}.
			\end{equation*}
			Multiplying out yields
			\begin{equation*}
				\phimat{v} + \psimat y = \lamalt \phimat{v} + \lamalt \psimat{v}, \qquad \psimat{v} - \phimat y = \lamalt \phimat y + \lamalt \psimat y.
			\end{equation*}
			{We recall that $\mu_{\min} > {\Re({\lambda_j})}$, as the shifted systems lie in the right-half plane. Therefore $\psimat$ is invertible.} Substituting and rearranging, then pre-multiplying by ${v}^{\top}$, yields:
			\begin{equation*}
				\psimat{v} + \phimat \psimat^{-1}\phimat {v} = \lamalt^2(\phimat+\psimat)\psimat^{-1}(\phimat+\psimat){v} \quad \Rightarrow \quad \lamalt^2 = \frac{{v}^{\top}(\psimat+\phimat \psimat^{-1}\phimat){v}}{{v}^{\top}(\phimat+\psimat)\psimat^{-1}(\phimat+\psimat){v}}.
			\end{equation*}
			This has the form of $S_2^{-1}S$ in \cite[Theorem 4]{pearson2012new}, with $\psimat = \frac{1}{\beta}M$, $\phimat = \frac{1}{\sqrt{\beta}}K$ ($M$ and $K$ denoting finite element mass and stiffness matrices, which could be replaced with alternative discretizations of the identity and negative Laplacian operators), and $\beta = 1$. Using this result, we have $\lamalt^2 \in \left[\frac{1}{2},1 \right]$, whereupon 
			taking square roots gives the result of the theorem statement. 
		\end{proof}
		{We refer to the full preconditioner that is applied to \eqref{eq:precondsystem} by the notation $\mathcal{P}_{SP}$ (for saddle point reformulation).}
		We discuss the computational aspects of applying the inner preconditioner $\mathcal{P}_{SP}$ in the following section.

		\section{Numerical experiments}\label{sec:Numerics}

		In this section we study the numerical performance of the new  preconditioners, namely $\mathcal{P}_{NC}$ and $\mathcal{P}_{SP}$, introduced in  Sections \ref{sec:ApproxPrecond} and \ref{sec:SP}, {respectively. The numerical performance analysis focuses on the number of outer iterations required for convergence and the computational cost measured in matrix--vector products with $A$.} We compare our proposed preconditioners against the `best-case' $\alpha$-circulant preconditioner $\mathcal{P}_{\alpha}$, and solving the unpreconditioned system $\mathcal{A}x= b$.

		The experimental framework follows the test case in \cite{weaver_correlation_2016,weaver2018time}.
		For all experiments we take ${A}$ to be a diffusion operator based on the shifted (negative) Laplacian in 2D with Dirichlet boundary conditions
		\begin{equation}\label{IminusLap}
			{A} = I_\sizeA - \frac{\nu}{h^2} \, \Lap
		\end{equation}
		for $\nu = \frac{D^2}{2\ell-4}$, where $D$ is the Daley lengthscale, which we take to be $0.2$, 
		$\Lap$ is the discretized Laplacian using a five-point finite difference stencil, $\ell$ as usual denotes the number of diffusion steps and ${ n_x = \frac{1}{h}-1}$
		is the number of spatial steps in each direction on the unit square minus one (hence, $\sizeA = n_x^2$). We take $\ell$ to be even, and investigate a range of values for $\ell$ and $n_x$. For this operator on a regular grid we can compute the eigendecomposition of ${A}$ analytically (see Appendix \ref{sec:Append}).

		The right-hand side, $b$, has the structure given in \eqref{eq:A} with $b_1$ being drawn from the standard normal distribution.
		As a large number of experiments consider the case $\ell = 10$,
		{{which is the  value used in  the operational ocean data assimilation configuration \cite{Chrust_2025}},} we introduce particular notation for the $10$-th roots of unity: $[1,\lambda_2,\lambda_3,-\overline{\lambda}_3,-\overline{\lambda}_2,-1,-\lambda_2,-\lambda_3,\overline{\lambda}_3,\overline{\lambda}_2]$ where
		$\lambda_2 = 0.8090 + 0.5878i$ and $\lambda_3 = 0.3090 + 0.9511i$.

		All experiments are performed in {\scshape Matlab} version 2024b on a machine with a 2.5GHz Intel
		sixteen-core i7 processor with 32GB RAM on an Ubuntu 24.04.1 LTS operating system. The stopping criteria for CI is based on the two-norm  of the relative residual $r_p = \frac{\|b-\mathcal{A}\chi_p\|_2}{\|b\|_2}$, as we take the initial guess $\chi_0 = 0$.  Unless otherwise specified CI is terminated when $r_p < 10^{-6}$.
		
		\subsection{Implementational concerns}
		
		We now discuss methods to implement each of our new preconditioners in order to control the number of matrix--vector products with $A$ for each iteration of CI applied to \eqref{eq:precondsystem}, which is the dominant computational expense for this problem.    
		The total permitted number of matrix--vector products with $A$ used to apply \jt{the nested Chebyshev} preconditioners is given by $\ell n_x  \resource$, where $\resource$ is a user-defined parameter. We investigate a range of values, $\resource \in [0.1,1]$. In this paper $n_x$ is used to scale the resource to allow for fair comparison across experiments of different dimensions. We define the total computational  budget in terms of the maximum number of matrix--vector products with $A$ as given in Table~\ref{tab:compbud}. {We allocate the same maximum number of inner iterations to all methods, which results in different computational budgets for $\mathcal{P}_{NC}$ and $\mathcal{P}_{SP}$.}  For $\mathcal{P}_{NC}$, two different allocation methods are studied: 
		\begin{itemize}
			\item $\mathcal{P}_{NC1}$ allocates the same budget to each of the $\ell$ sub-problems, leading to varying accuracy of the solution across the sub-problems,
			\item $\mathcal{P}_{NC2}$ allocates the budget according to Algorithm~1. This approach is based on the upper bound on the asymptotic convergence factor (as given in Theorems \ref{thm:AxelssonIteration} and \ref{thm:convergence}) for each sub-problem, ensuring similar accuracy of the computed solution across all sub-problems.
		\end{itemize}

		Evaluating Algorithm 1 requires only scalar operations and is done ahead of applying CI to \eqref{eq:precondsystem}. This leads to negligible additional setup cost for $\mathcal{P}_{NC2}$ compared to $\mathcal{P}_{NC1}$ \jt{so long as approximations to the extreme eigenvalues of $A$, i.e. $\mu_1$ and $\mu_N$, are already available}.  For a large value of $\resource$,  we expect $\mathcal{P}_{NC1}$ and $\mathcal{P}_{NC2}$ to have the same performance. {We could further accelerate convergence of $\mathcal{P}_{NC1}$ and $\mathcal{P}_{NC2}$ by additionally preconditioning the sub-problems \eqref{eq:blockproblem}. However, this would require good estimates of the extreme eigenvalues of the preconditioned system, with the quality of these estimates affecting the convergence of CI applied to \eqref{eq:precondsystem}. A number of simple preconditioners (e.g. preconditioning with the diagonal) were found to be ineffective for this problem, according to the results of \cite{weaver2018time}. We hence apply unpreconditioned CI to \eqref{eq:blockproblem} to perform matrix--vector products with $\mathcal{P}_{NC1}^{-1}$ and $\mathcal{P}_{NC2}^{-1}$ using equation~\eqref{eq:Paform}.
			
			\begin{table}[ht]
				\centering
				\begin{tabular}{c|ll}
					Name {/ Operator} & $\mathcal{A}(\cdot)$ & $\mathcal{P}^{-1}(\cdot)$  \\ \hline
					$I_{\ell N}$ & $\ell$ & --  \\
					$\mathcal{P}_{NC}$ & $\ell$ & $\ell n_x  \resource$ \\
					$\mathcal{P}_{SP}$ & $\ell$ & ${2(\ell-1) }n_x  \resource$ and $\ell$ AMG 
				\end{tabular}
				\caption{{Maximum number of matrix--vector products with $A$  within} one outer iteration of CI applied to \eqref{eq:precondsystem} for different choices of preconditioners. For $\mathcal{P}_{SP}$ this {also includes} the  number of AMG initializations required to apply the preconditioner.}
				\label{tab:compbud}
			\end{table}

			\begin{algorithm}[ht]
				\caption{Algorithm for assigning iteration resource unevenly across sub-problems, according to the upper bound on the asymptotic convergence factor given in Theorem \ref{thm:convergence}}\label{alg:AssignIterations}

				\textbf{Input:} $\ell$, $n_x$, $\umax$, $\umin$, $\alpha$, $\resource$ \\
				\textbf{Output:} {$\texttt{all}$ resource allocation for sub-problems}\\
				1: Compute $\ell$ scaled roots of unity, $\lambda_1,\dots,\lambda_{\ell}$.
				
				2: \textbf{for }{$j = 1,\dots,\ell$}{\\
					3: \,\quad Compute $\sigma_{{j}} = \frac{\sqrt{\kappa(\Re(B_j))}-1}{\sqrt{\kappa(\Re(B_j))}+1}$.\\
					4: \,\quad Compute $r({{j}}) = \frac{\ln(\sigma_1)}{\ln(\sigma_{{j}})}$.
				}\\
				5: \textbf{end for}\\
				6: Normalize $r = \frac{r}{\sum_{{j}}r{(j)}}$. \\
				7: Allocation is $\texttt{all} = \lfloor r  \ell n_x \resource\rfloor$ where the floor operation is applied entrywise.
			\end{algorithm}

			For $\mathcal{P}_{SP}$  we replace \eqref{eq:blockproblem} with an equivalent problem entirely in real arithmetic, \eqref{eq:saddlesystem}. {We allocate the same number of iterations to each sub-problem}. For real roots $\lambda_{j'}$, $j'\in \{1,\ell/2+1\}$ we solve 
			$({{A}}\pm{\alpha^{1/\ell}}{I}_\sizeA) x_{j'} = b_{j'}$  using the preconditioned Conjugate Gradient method (see \cite{HestenesStiefel}) with an algebraic multigrid (AMG) \cite{ruge1987algebraic} preconditioner that approximates $A \pm \alpha^{1/\ell} I_N$. For complex roots $\lambda_{j}$, $j \in [2,\ell/2]\cup [\ell/2+2,\ell]$, we solve \eqref{eq:saddlesystem} using MINRES, {which requires two applications of $A$ per iteration}. We use AMG to apply the block diagonal preconditioner $\mathcal{P}_{D}$, given by \eqref{eq:minresprecond}, approximately.
			We apply $\mathcal{P}_{D}^{-1}$ blockwise, using the \texttt{HSL\_MI20} algebraic multigrid solver \cite{boyle2010hsl_mi20} as a black box with default parameters, using a single V-cycle. In our numerical experiments coarsening often terminates prematurely, suggesting that even better numerical performance could be obtained by adapting the AMG implementation to the problem of interest.  
			A practical application of the preconditioner $\mathcal{P}_{SP}$ depends on having a high-quality and computationally efficient implementation of AMG (or similarly affordable way) to apply $\mathcal{P}_{D}^{-1}$. This is not trivial for many problems, and may limit the applicability of this approach for general systems.

			We note that since our new preconditioners only approximate $\mathcal{P}_\alpha$, the spectral bounds of Corollary~\ref{cor:extremeevals} are no longer guaranteed to hold.  In what follows we do not adjust the spectral limits given by Corollary \ref{cor:extremeevals} when applying $\mathcal{P}_{NC1}$, $\mathcal{P}_{NC2}$, and $\mathcal{P}_{SP}$. This is likely to have the most significant effect for small choices of $\resource$.

			\begin{figure}
				\centering
				\includegraphics[width=0.95\textwidth]{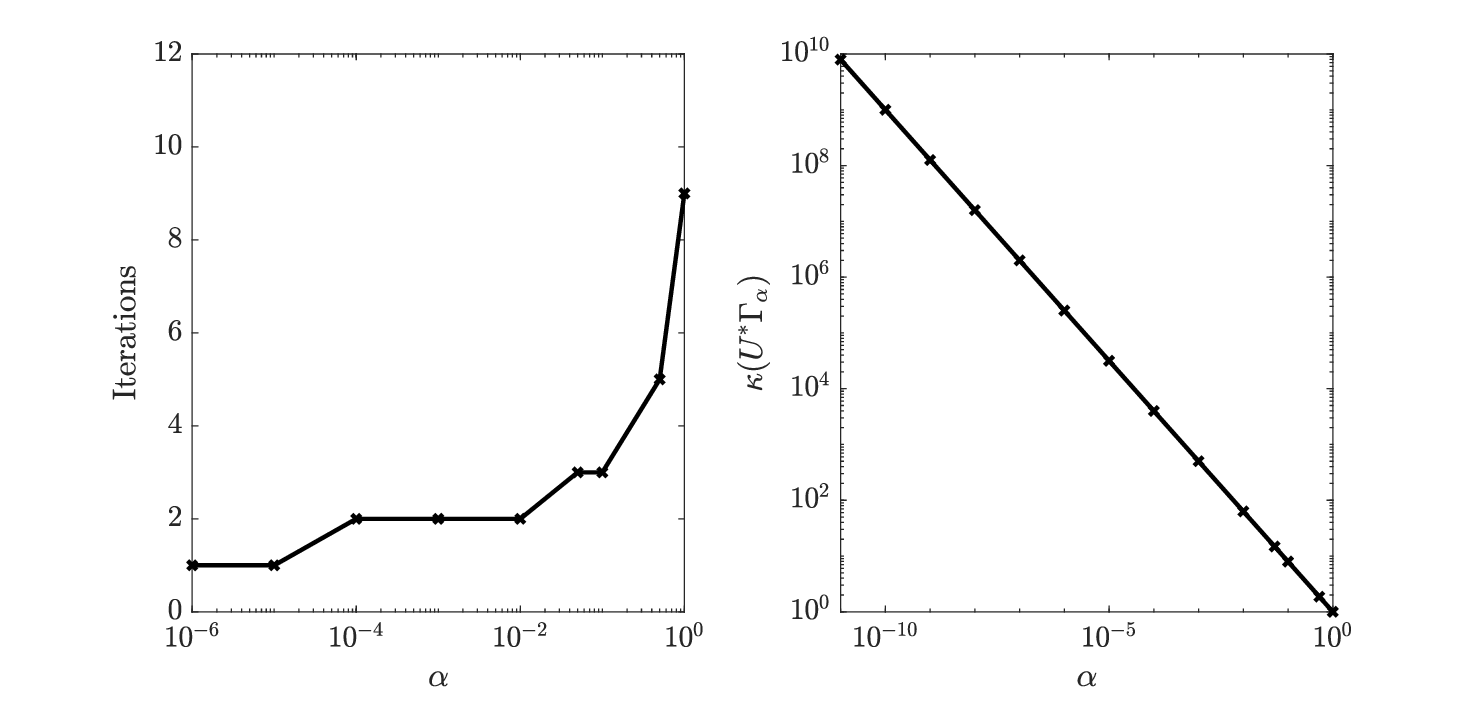}
				\caption{Left panel: Number of iterations to reach convergence for the problem introduced in Section \ref{sec:Numerics} using the block $\alpha$-circulant preconditioner $\mathcal{P}_{\alpha}$ for $\ell=10$, $n_x=100$, and hence $\mathcal{A}\in \mathbb{R}^{100,000 \times 100,000}$. Right panel: Condition number of the matrix of eigenvectors of $\mathcal{P}_{\alpha}$, $U^*\Gamma_{\alpha}$, for different values of $\alpha$.}
				\label{fig:exact alpha circ}
			\end{figure}
			
			The `best-case' preconditioner $\mathcal{P}_{\alpha}$ is applied using the backslash command in {\scshape{MATLAB}}. It is not  straightforward to express the cost of a single application of $\mathcal{P}_{\alpha}^{-1}$ in terms of matrix--vector products with $A$, so we simply compare the performance in terms of outer iterations. 
			The left panel of Figure \ref{fig:exact alpha circ} shows the number of iterations required to reach convergence for $\mathcal{P}_{\alpha}$ for a range of values of $\alpha$. For $\alpha=1$, we recover the block-circulant preconditioner. The number of iterations decays monotonically with $\alpha$, with convergence in one iteration occurring for $\alpha \le 10^{-5}$.  
			When using the `best-case' preconditioner the fastest convergence is obtained when taking $\alpha$ as small as possible. \jt{We note that the eigenvalues of $\mathcal{P}_{\alpha}^{-1}\mathcal{A}$ become more clustered} as $\alpha$ decreases (see Corollary \ref{cor:extremeevals}). 
			However, it is known that the eigenvector matrix of $\mathcal{P}_{\alpha}$, $U^*\Gamma_{\alpha} $, can become very ill-conditioned in the case of small $\alpha$ \cite{gander_palitta_2024}, which is seen in the right panel of Figure \ref{fig:exact alpha circ} for our test problem. There is hence a trade-off between ensuring fast convergence of the preconditioned problem and avoiding ill-conditioning within the preconditioner itself.
			\subsection{Performance of nested CI approaches, $\mathcal{P}_{NC1}$ and $\mathcal{P}_{NC2}$}\label{sec:NumericsCI}

			We now consider the performance of preconditioners based on nested CI, namely $\mathcal{P}_{NC1}$ and $\mathcal{P}_{NC2}$. We begin by considering the performance of CI applied to the sub-problems~\eqref{eq:blockproblem}. 
			Table \ref{tab:Cheb conv tol} shows the number of iterations of CI required for the sub-problems $A-\lambda_j I_\sizeA$  to reach convergence in the case $\ell=10$, $n_x=100$, and $\alpha=1$. We see that, in agreement with the theory of Section \ref{sec:ApproxPrecond}, the iterations for conjugate pairs $\lambda_j$, $\overline{\lambda}_j$ are the same. Additionally, for a fixed tolerance, significantly more iterations are required for the case $\lambda_j=1$ than for the case $\lambda_j = -1.$ This indicates that if the same number of iterations are allocated to each sub-problem, then the blocks are solved to different tolerances. For small $\resource$, the poor convergence associated with $\lambda_j=1$ may lead to larger numbers of outer iterations (indeed this is observed in subsequent numerical experiments). 
			Following the result of Theorem \ref{thm:convergence}, we also expect the convergence behaviour for the sub-problems to become more similar with decreasing $\alpha$, due to smaller differences in the values of $\Re(\lambda_j)$.}
		
		\begin{table}[htb]
			\centering
			\begin{tabular}{c|cccccccccc}
				$\epsilon$  &   $1$ & $\lambda_2$ & $\lambda_3$ & $-\overline{\lambda}_3$& $-\overline{\lambda}_2$ &$  -1$& $-\lambda_2$ & $-\lambda_3$ & $\overline{\lambda}_3$ & $\overline{\lambda}_2$ \\
				\hline
				$10^{-10}$ &760 &  274 &  184 &  147  & 128 &  118 &  128 &  147 &  184 &  274\\
				$10^{-9}$&683 &  248 &  167 &  133 &  115 &  107 &  115 &  133 &  167 &  248\\
				$10^{-8}$&611 &  222 &  150 &  119 &  103 &   95 &  103 &  119 &  150 &  222\\
				$10^{-7}$&535 &  196 &  132 &  105 &   90 &   84 &   90 &  105 &  132 &  196\\
				$10^{-6}$&463 &  170 &  114 &   90 &   78 &   72  &  78 &   90 &  114 &  170\\
				$10^{-5}$&388 &  143 &   97 &   76 &   65 &   61  &  65 &   76 &   97 &  143\\
				$10^{-4}$&314 &  116 &   79 &   62 &   53 &   49  &  53 &   62 &   79 &  116\\
				$10^{-3}$&240 &   89 &   60 &   47 &   40 &   38  &  40 &   47 &   60 &   89\\
				$10^{-2}$&166 &   62 &   42 &   33 &   28 &   26  &  28 &   33  &  42 &   62\\
				$10^{-1}$& 93&    34 &   23 &   18 &   15 &   15  &  15 &   18  &  23 &   34
			\end{tabular}
			\caption{Number of iterations to reach convergence for the inner sub-problems \eqref{eq:blockproblem}, involving $A-\lambda_j I_\sizeA$, using CI for different choices of tolerance, $\varepsilon$. For this problem $\ell=10$, $n_x=100$, and $\alpha=1$.}
			\label{tab:Cheb conv tol}
		\end{table}
		
		Figure~\ref{fig:Asymptotic rate of convergence} shows the relative error $r_p = \frac{\|b-\mathcal{A}\chi_p\|_2}{\|b\|_2}$ at each iteration of CI applied to the sub-problems \eqref{eq:blockproblem} for $\alpha=1$ (left) and $\alpha=0.01$ (right). The dashed lines show an upper bound on the relative error at each iteration, which is obtained by multiplying the initial error by the upper bound on the asymptotic  convergence factor given by Theorem \ref{thm:convergence}. 
		We note that this upper bound is tightest for $\lambda_j$ with smallest real part, and for smaller values of $\alpha$. Using  Algorithm 1 to allocate computational resource across the sub-problems is therefore likely to overallocate resource to $\lambda_j$ with positive real part in the case $\alpha=1$ to a greater extent than for  $\alpha=0.01$.
		
		\begin{figure}
			\centering
			\includegraphics[width= 0.95\textwidth,trim =0mm 0cm 10mm 0cm, clip]{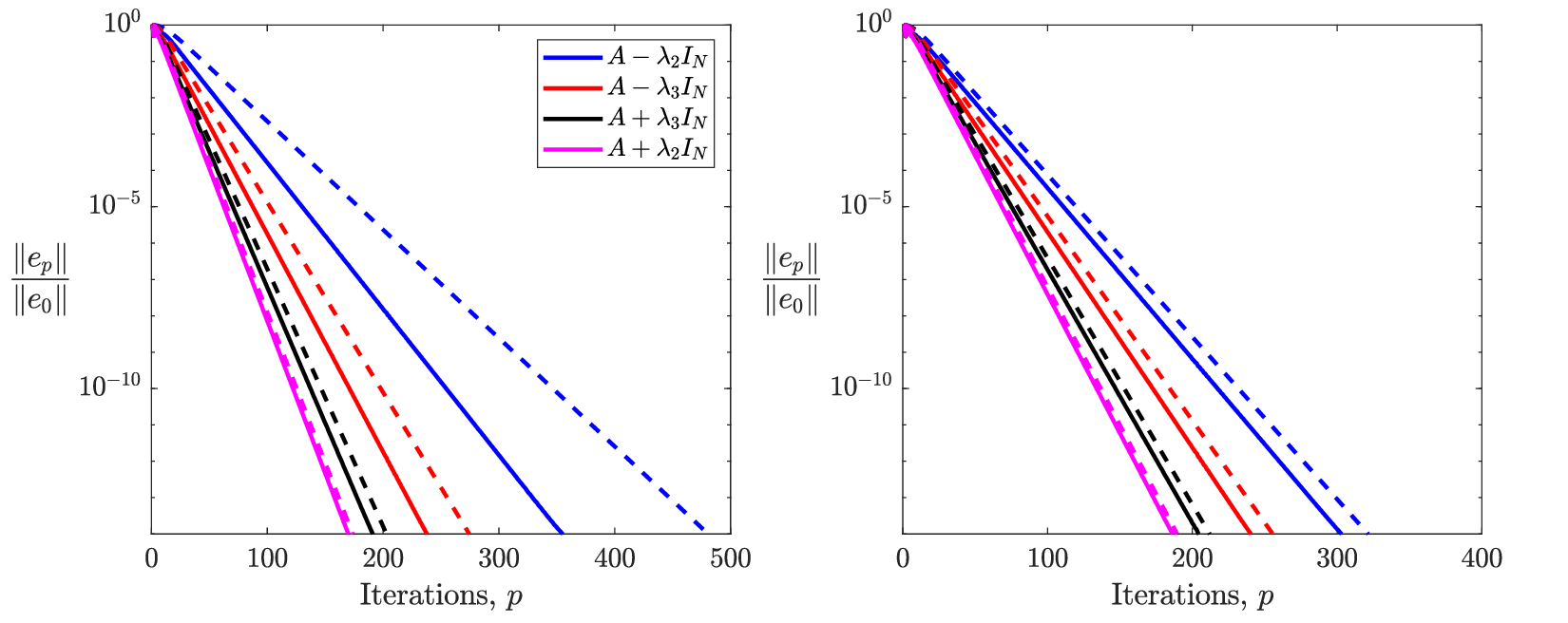}
			\caption{Relative error at each iteration of CI applied to { $A\pm\lambda_j I_N$} for $\lambda_j$ complex, $\ell=10$, $n_x = 100$ (solid lines) and upper bound on the error using the asymptotic  convergence factor given by Theorem \ref{thm:convergence} (dashed lines), for $\alpha=1$ (left) and $\alpha = 0.01$ (right). Only complex roots with positive imaginary part are shown.}
			\label{fig:Asymptotic rate of convergence}
		\end{figure}

		\begin{figure}
			\centering
			\includegraphics[width=0.95\textwidth]{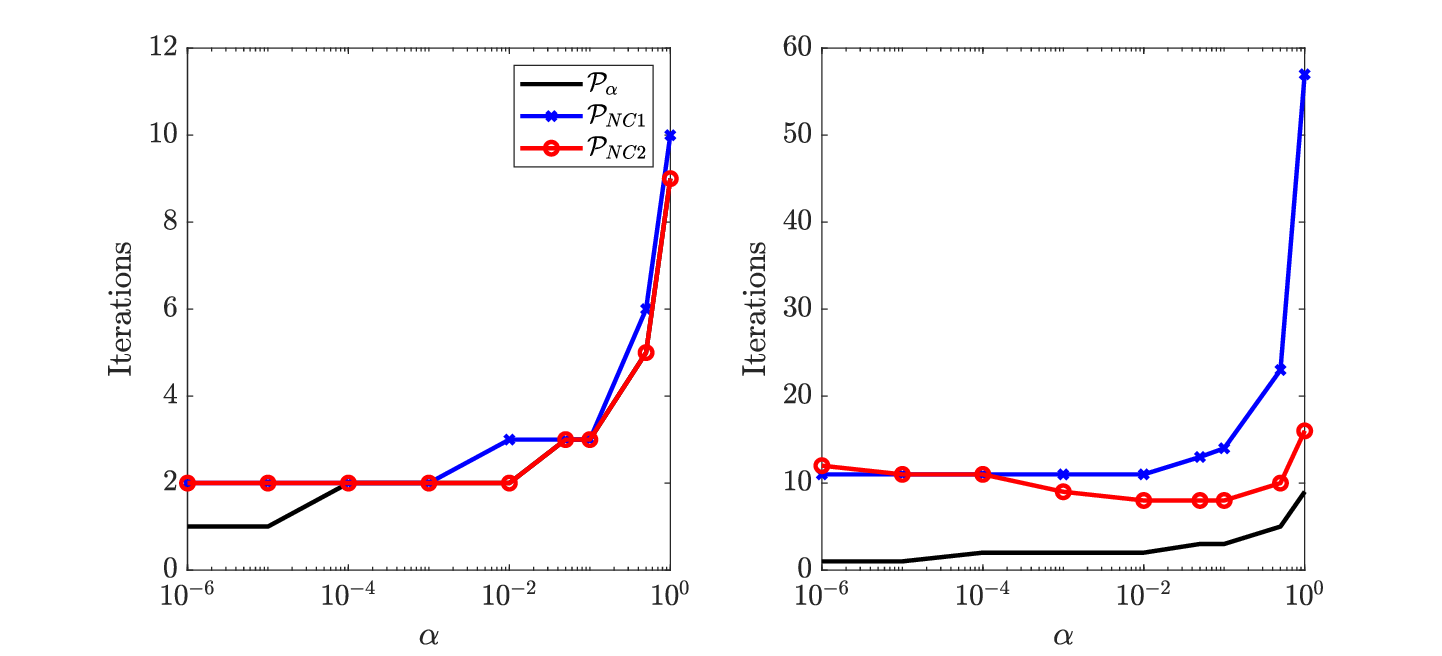}
			\caption{Number of iterations to reach convergence using $\mathcal{P}_{\alpha}$ (black), $\mathcal{P}_{NC1}$ (blue crosses), and $\mathcal{P}_{NC2}$ (red circles) for different values of $\alpha$ and for $\resource = 1$ (left) and $\resource= 0.2$ (right), with $\ell=10$ and $n_x=100$. Note the difference in $y$-axis values between the two plots.}
			\label{fig:alphaNC1NC2}
		\end{figure}
		We now investigate the performance of $\mathcal{P}_{NC1} $ and $\mathcal{P}_{NC2}$ applied to \eqref{eq:precondsystem}. We start by considering the sensitivity of the preconditioners to the choice of $\alpha$ for different values of $\resource$. 
		The left panel of Figure \ref{fig:alphaNC1NC2} shows the number of iterations required to reach convergence in a high resource setting ($\resource = 1$). Performance for $\mathcal{P}_{NC1}$ (blue crosses) and $\mathcal{P}_{NC2}$ (red circles) is similar both to each other, and the best-case preconditioner $\mathcal{P}_{\alpha}$ (black line). 
		In this high resource setting,  the majority of the sub-problems \eqref{eq:blockproblem} are solved to a tolerance at least as strict as $10^{-5}$ (see Table \ref{tab:Cheb conv tol} for an illustration for the case $\alpha = 1$). This  is close to the tolerance of $10^{-6}$ for the outer CI applied to \eqref{eq:precondsystem}. 
		Therefore there is little gain to be made by adopting the load balancing approach of $\mathcal{P}_{NC2}$. 
		By increasing $\eta$ further, or using a lower tolerance to solve the sub-problems \eqref{eq:blockproblem}, it is possible to match the performance of $\mathcal{P}_\alpha$ with $\mathcal{P}_{NC1}$ and $\mathcal{P}_{NC2}$ (not shown).

		The right panel of Figure~\ref{fig:alphaNC1NC2} shows the number of iterations required to reach convergence for $\mathcal{P}_{NC1}$ and $\mathcal{P}_{NC2}$ for varying $\alpha$ in a small resource setting ($\resource=0.2$). In this setting, and for $\alpha>10^{-4}$, there is a clear advantage to using $\mathcal{P}_{NC2}$. The largest number of {outer} iterations is required for $\alpha = 1$, with $\mathcal{P}_{NC1}$ requiring 57 iterations compared to 16 for $\mathcal{P}_{NC2}$. 
		For $\mathcal{P}_{NC1}$ each sub-problem is allocated $20$ inner iterations, resulting in \eqref{eq:blockproblem} being solved to very different tolerances: e.g. \aw{approximately} $0.04$ for $\lambda_j=-1$ and $0.72$ for $\lambda_j=1$ when $\alpha=1$. 
		In contrast, for $\mathcal{P}_{NC2}$ Algorithm 1 assigns more iterations to the sub-problem corresponding to $\lambda_j=1$ and fewer to the case for $\lambda_j=-1$ ($30$ and $4$ respectively when $\alpha=1$). This means that all sub-problems are solved to approximately the same tolerance ($0.6$ for $\alpha=1)$, resulting in better overall performance of the preconditioner.  For $\alpha \ll 1$ there is less advantage to using $\mathcal{P}_{NC2}$, in agreement with the conclusions of Theorem \ref{thm:convergence}. For $\mathcal{P}_{NC2}$, the smallest number of iterations is required for $\alpha = 0.01$.  In the experiments that follow we consider the two cases $\alpha = 1$ and $0.01$.

		\begin{table}[]
			\centering
			\begin{tabular}{c|ccc|ccc}
				
				\multirow{2}{*}{\Large $n_x \backslash {\resource}$ } & \multicolumn{3}{c|}{$\mathcal{P}_{NC1}$} & \multicolumn{3}{c}{$\mathcal{P}_{NC2}$}\\   
				& 0.1&0.2&0.3 & 0.1&0.2&0.3 \\
				\hline
				$\alpha = 1$&&&&&&\\ \hline
				50  & 164	(9840) & 62	(6820) &	35	(\textbf{5600}) &	70	(3710) &	20	(2040)	&12	(\textbf{1848})\\
				100 & 140	(15400) &	56	(11760) &	33	(\textbf{10230})&	47	(4794)	& 16	(\textbf{3248})&	11	(3355)\\
				200& 113	(23730)	&51	(20910) &	28	(\textbf{17080})&	37	(7511)&	15	({6075})	&10	(\textbf{6040})\\
				300& 103	(31930)&	46	(28060)&	27	(\textbf{24570})&	34	(10370)&	14	(\textbf{8456})&	10	(9050)\\
				400 & 92	(37720)&	44	(35640)&	26	(\textbf{31460}) &	31	(12555) &	13	(\textbf{10452})&	10	(12050)\\
				
				500 &  87	(44370)&	40	(40400)&	25	(\textbf{37750})&	30	(15150) &	12	(\textbf{12060})&	10	(15040)\\
				\hline
				$\alpha = 0.01$&&&&&&\\ \hline          
				
				50 &38	(2280)&	13	(1430) &	7	(\textbf{1120}) &	27	(1512)	&10	(\textbf{1050})	&7	(1085)\\
				100 & 33	(3630)&	12	(2520)&	7	(\textbf{2170})&	21	(2205)&	8	(\textbf{1640}) &	6	(1824) \\
				200&31	(6510)&	11	(4510)&	6	(\textbf{3660}) &	19	(3895) &	8	(\textbf{3240})&	6	(3636)\\
				300& 30	(9300) &	10	(6100) &	6	(\textbf{5460}) &	18	(5472) &	7	(\textbf{4242}) &	5	(4520)\\
				400 & 29	(11890)	&10	(8100)&	6	(\textbf{7260}) &	18	(7290) &	7	(\textbf{5628})	& 5	(6030)\\
				500 & 28	(14280)&	10	(10100) &	6	(\textbf{9060}) &	17	(8602)	&7	({7035}) &	4	(\textbf{6020})
			\end{tabular}

			\caption{Number of iterations to reach convergence of CI applied to \eqref{eq:precondsystem} using $\mathcal{P}_{NC1}$ and $\mathcal{P}_{NC2}$ with increasing size of the sub-problem, $n_x$, with $\ell = 10$ sub-problems. The total number of matrix--vector products with $A$ required to reach convergence is given in brackets. The permitted computational resource used to apply the preconditioner at each iteration is given by $\ell n_x  \resource$. The problem size ranges from $\mathcal{A}\in \mathbb{R}^{25,000\times 25,000}$ to $\mathcal{A}\in \mathbb{R}^{2,500,000\times 2,500,000}$.} 
			\label{tab:nx_scaling}
		\end{table}

		For the motivating diffusion-based covariance problem in \cite{weaver_correlation_2016, weaver2018time}, $\sizeA$ is very large (order of $10^5$ and greater in those studies), so it is of interest to understand how our methods scale with $n_x$, the number of points in each spatial dimension. We note that for this problem the extreme eigenvalues of $\mathcal{P}_\alpha^{-1}\mathcal{A}$ do not change as $n_x$ increases (see Corollary \ref{cor:extremeevals}). Table \ref{tab:nx_scaling} shows the change in the number of  iterations of CI applied to \eqref{eq:precondsystem} and matrix--vector products with $A$ needed when using $\mathcal{P}_{NC1}$ and $\mathcal{P}_{NC2}$ for increasing problem size for $\alpha = 1$ and $\alpha = 0.01$. Here we increase $\eta$ in line with the dimension of the system, considering $\resource =  [0.1,0.2,0.3]$. This allows us to obtain scale independence of the outer iterations for both preconditioning approaches in the case $\alpha = 0.01$. As the number of  iterations is reduced by using a smaller value of $\alpha$, we  obtain a corresponding large reduction in the number of matrix--vector products for $\alpha = 0.01$ compared to $\alpha=1$. Increasing the resource improves the performance of the preconditioner in terms of outer iterations, but for $\mathcal{P}_{NC2}$ the smallest number of matrix--vector products is {often} achieved for $\resource=0.2$. This is because the reduction in outer iterations for $\eta = 0.3$ is not sufficient to mitigate for the extra cost of applying the preconditioner compared to $\eta=0.2$. For both values of $\alpha$ considered here  $\mathcal{P}_{NC2}$ performs better than $\mathcal{P}_{NC1}$ in terms of  iterations and total matrix--vector products with ${A}$, although the difference is larger for $\alpha = 1$.

		\begin{table}[]
			\centering
			\begin{tabular}{c|ccc|ccc}
				\multirow{2}{*}{\Large $\ell \backslash {\resource}$ } &\multicolumn{3}{c|}{$\mathcal{P}_{NC1}$} & \multicolumn{3}{c}{$\mathcal{P}_{NC2}$}\\   
				& 0.1&0.2&0.3 & 0.1&0.2&0.3 \\
				\hline
				$\alpha=1$&&&&&&\\ \hline
				
				6  &    118  (7788) &  50 (6300)&  28 (\textbf{5208})  & 58 (3596) & 18 (2214) & 11 (\textbf{2002})\\
				10  & 135	(14850)	&57	(11970)&	33	(\textbf{10230})	&48	(4896)&	17	(3451)&	11	(\textbf{3355}) \\
				16 & 152	({26752})	&61	(20496)	&35	(\textbf{17360})	&35	(5915)	&14	(\textbf{4592})	&11	(5379) \\
				20 & 163	(35860)	&64	(26880)&	35	(\textbf{21700})	&31	(6603)	&14	(\textbf{5768})	&11	(6721)\\
				30 & 168	(55440)&	65	(40950)	&37	(\textbf{34410})	&26	({8320})	&13	({\textbf{8034}})	&11	(10120)\\
				40 &  176	(77440)	&66	(55440)	&37	(\textbf{45880})	&27	({11313})	&12	(\textbf{9852})	&11	(13387)\\
				50& 185	(101750)	&71	(74550)	&39	(\textbf{60450})	&24	({12576}) &	12	(\textbf{12300})	&11	(16786)\\
				\hline
				$\alpha=0.01$&&&&&&\\ \hline   
				6 &  41	(2706)	&13	(1638)&	8	(\textbf{1488})	&32	(1984)&	10	(1230)	&6	(\textbf{1104})   \\
				10&34	(3740)	&12	(2520)&	7	(\textbf{2170})	&21	(2205)	&8	(\textbf{1640})	&6	(1824) \\
				16&31	(5456)	&11	(3696)&	7	(\textbf{3472})	&17	(2856)&	8	(2640)	&5	(\textbf{2445}) \\
				20& 30	(6600)	&11	(4620)&	7	(\textbf{4340})	&15	(3135)&	8	(3272)	&4	(\textbf{2440})\\
				30&29	(9570)	&10	(6300)&	6	(\textbf{5580})	&18	(5706)&	6	({3690})	&4	(\textbf{3664}) \\
				40& 28	(12320)	&10	(8400)&	6	(\textbf{7440})	&20	(8420)&	6	({4938})	&4	(\textbf{4884}) \\
				50&  28	(15400)	&10	(\textbf{10500})&	7	(10850)	&20	(10500)	&6	({6144})	&4	(\textbf{6112}) \\
			\end{tabular}
			\caption{Number of iterations to reach convergence of CI applied to \eqref{eq:precondsystem} using $\mathcal{P}_{NC1}$ and $\mathcal{P}_{NC2}$ with increasing number of  sub-problems, $\ell$, with $n_x = 100$. The total number of matrix--vector products with $A$ required to reach convergence is given in brackets. The permitted computational resource used to apply the preconditioner at each iteration is given by $\ell n_x \resource$. The problem size ranges from $\mathcal{A}\in \mathbb{R}^{60,000\times 60,000}$ to $\mathcal{A}\in \mathbb{R}^{500,000\times 500,000}$. \label{tab:NCscalingwithl}}
		\end{table}

		Table~\ref{tab:NCscalingwithl} shows how the number of  iterations and matrix--vector products with ${A}$ change as we increase the number of blocks in $\mathcal{A}$ for $\alpha = 1$ and $\alpha=0.01$. We note that for the motivating covariance problem only small values of $\ell$ are used, but scaling with $\ell$ may be relevant for other applications.  Similarly to Table \ref{tab:nx_scaling}, $\mathcal{P}_{NC2}$ requires substantially fewer iterations to reach  convergence than $\mathcal{P}_{NC1}$.  For both methods, choosing $\alpha = 0.01$ yields fewer outer iterations and matrix--vector products with $A$ than $\alpha =1$. For an appropriate choice of $\eta$, we can achieve a (near) scale-independent number of outer iterations with increasing $\ell$. This property can be linked with the theoretical results concerning the spectrum of $\mathcal{P}_{\alpha}^{-1}\mathcal{A}$ from Section \ref{sec:Background}. Theorem \ref{thm:evals} states that  $\mathcal{P}_\alpha^{-1}\mathcal{A}$ has $(\ell-1)N$ unit eigenvalues and $N$ potentially non-unit eigenvalues. Hence, as $\ell$ increases, the proportion of non-unit eigenvalues decreases. Additionally, as $\umin>1\ge \alpha$ for this experiment, the expression in Corollary \ref{cor:extremeevals} is monotonically decreasing with increasing $\ell$. Therefore the largest eigenvalue of $\mathcal{P}_\alpha^{-1}\mathcal{A}$ decreases as $\ell$ increases, with $\lim_{\ell \rightarrow \infty} \lambda_{\max}(\mathcal{P}_\alpha^{-1}\mathcal{A}) = 1$. 
		We note that for  smaller values of $\eta$ we are further away from the theoretical setting. 
		Although the number of  iterations is small, the number of matrix--vector products with ${A}$ increases approximately linearly with $\ell$, as the number of sub-problems to be solved increases.

		\begin{figure}
			\centering
			\includegraphics[width=\textwidth,trim = 20mm 0mm 20mm 0mm, clip]{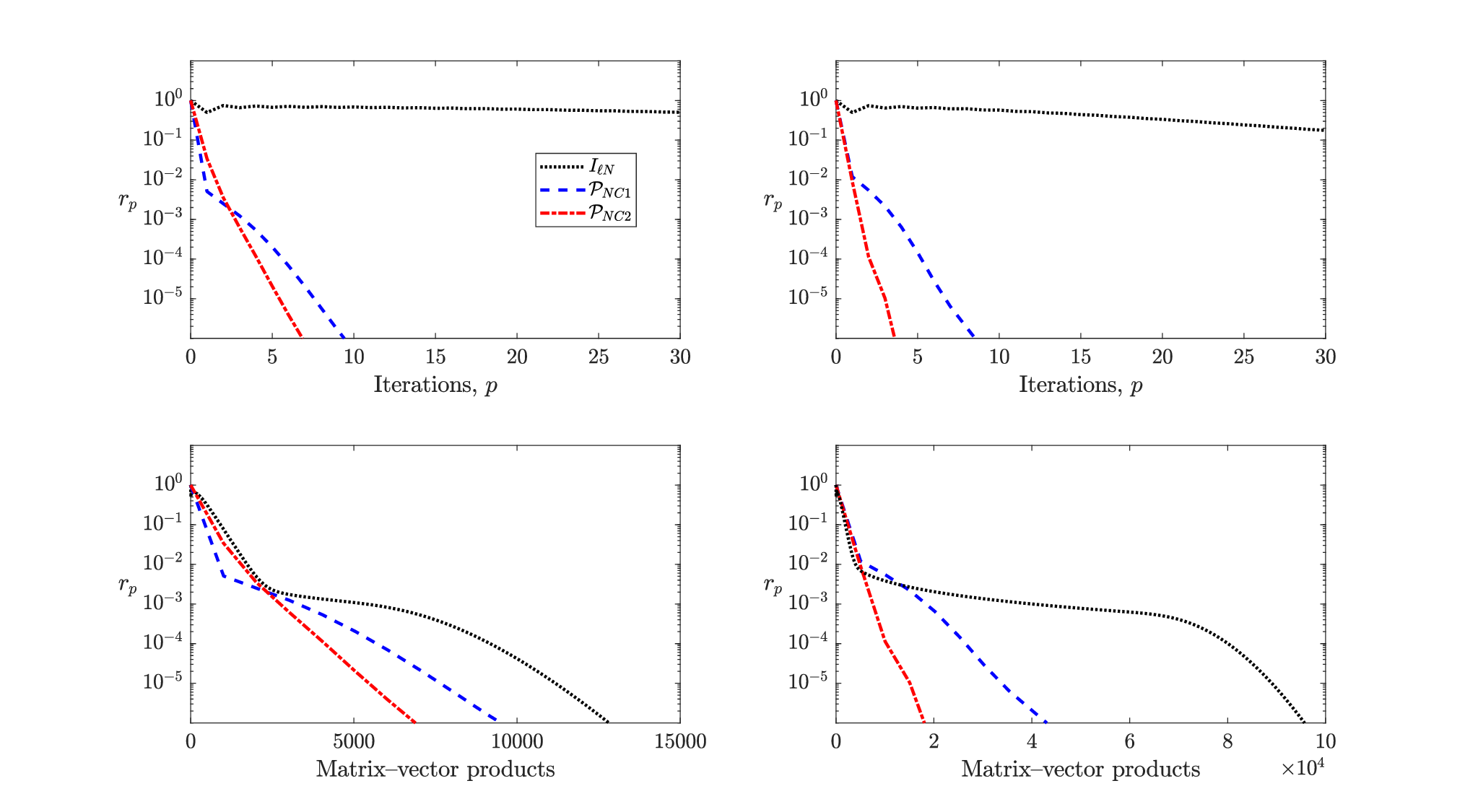}
			\caption{Comparison of the  norm of the relative residual $r_p=\frac{\|b-\mathcal{A}\chi_p\|_2}{\|b\|_2}$ with iterations (top panels) and matrix--vector products (bottom panels) using  $I_{\ell N}$, $\mathcal{P}_{NC1}$, and $\mathcal{P}_{NC2}$ for $\resource = 0.2$ and $\alpha=0.01$. The left panels show the case $\ell = 10$ and  the right panels show $\ell=50$.  For this problem $n_x=500$. Note the difference in the $x$-axis scales between the lower left and right panels.}
			\label{fig:residual_vs_matvecs}
		\end{figure}
		
		We now compare the performance of the $\mathcal{P}_{NC}$ preconditioners against solving the unpreconditioned system.
		Figure \ref{fig:residual_vs_matvecs} shows the reduction in the relative norm of the residual $r_p := \frac{\|b-\mathcal{A}\chi_p\|_2}{\|b\|_2}$ plotted against iterations (top panels) and matrix--vector products with $A$ (bottom panels) for a number of preconditioners for $\ell=10$ (left) and $\ell=50$ (right) when $\alpha=0.01$ and $n_x=500$. The matrix--vector products include both the application of $\mathcal{A}$ and the relevant preconditioner.   Solving the unpreconditioned system requires a much larger number of iterations to reach convergence ($1282$ iterations for $\ell=10$, and $1914$ for $\ell=50$). This means that solving \eqref{eq:precondsystem} with either $\mathcal{P}_{NC1}$ or $\mathcal{P}_{NC2}$ requires fewer matrix--vector products with $A$ than solving the unpreconditioned system \eqref{eq:A}. Additionally, the load balancing approach of $\mathcal{P}_{NC2}$ results in a clear reduction in outer iterations and matrix--vector products with $A$ compared to $\mathcal{P}_{NC1}$ for both choices of $\ell$. As $n_x$ and $\ell$ increase we expect further computational gains due to the beneficial scaling behaviour of $\mathcal{P}_{NC2}$. \jt{Tests with matrices $A$ that cannot be readily diagonalized showed qualitatively similar behaviour to that described in this section, indicating that the conclusions presented here are likely to apply to a broader range of applications. }
		
		\subsection{Performance of saddle point reformulation, $\mathcal{P}_{SP}$}
		We now study the performance of $\mathcal{P}_{SP}$, the preconditioner introduced in Section \ref{sec:SP}. This method transforms the sub-problems corresponding to complex roots of unity  into a saddle point system which takes real values. We begin by considering the performance of MINRES with AMG applied to the sub-problems \eqref{eq:saddlesystem}. Table \ref{tab:SP conv tol} shows the number of iterations required for each inner problem to reach convergence for each of the sub-problems \eqref{eq:blockproblem} when applying $\mathcal{P}_{D}$ with a large resource level ($\resource=1$). We recall that the two real scaled roots of unity are solved using the preconditioned Conjugate Gradient method. For this implementation these real roots require the fewest iterations to reach convergence.  There is a slight difference between the convergence of the sub-problems corresponding to complex $\lambda_j$, but this is not as dramatic as in the previous section for $\mathcal{P}_{NC}$. We also note that the numbers of iterations required to reach a given tolerance are much smaller when using $\mathcal{P}_{D}$ than $\mathcal{P}_{NC}$. We therefore do not propose a resource allocation version of this method.

		\begin{table}[htb]
			\centering
			\begin{tabular}{c|cccccccccc}
				$\epsilon$  &   $1$ & $\lambda_2$ & $\lambda_3$ & $-\overline{\lambda}_3$& $-\overline{\lambda}_2$ &$  -1$& $-\lambda_2$ & $-\lambda_3$ & $\overline{\lambda}_3$ & $\overline{\lambda}_2$ \\
				\hline
				$10^{-10}$ & 7   & 28 &   28 &   28  &  24 &    6 &   24 &   28 &   28 &   28\\
				$10^{-9}$&6   & 26  &  26   & 24  &  22  &   5  &  22  &  24  &  26  &  26\\
				$10^{-8}$&6   & 24  &  22  &  22 &   18  &   5   & 18  &  22  &  22  &  24\\
				$10^{-7}$& 5  &  20  &  20  &  20  &  16  &   4  &  16 &   20 &   20 &   20\\
				$10^{-6}$& 5   & 18   & 18  &  18  &  14  &   4  &  14 &   18   & 18  &  18\\
				$10^{-5}$&   4  &  16 &   16 &   14 &   12 &    3 &   12&    14  &  16 &   16\\
				$10^{-4}$& 3   & 12   & 12  &  12  &  10  &   3   & 10  &  12   & 12  &  12\\
				$10^{-3}$&  3  &  10  &  10 &   10  &   8  &   2  &   8 &   10  &  10 &   10\\
				$10^{-2}$& 2   &  8   &  6  &   6  &   6   &  2   &  6   &  6   &  6   &  8\\
				$10^{-1}$&   2  &   5 &    4  &   4  &   4  &   1 &    4  &   4  &   4  &   5
			\end{tabular}
			\caption{Number of iterations to reach convergence for the inner sub-problems \eqref{eq:blockproblem}, involving $A-\lambda_j I_\sizeA$, using the preconditioner $\mathcal{P}_{D}$ for different choices of tolerance. For this problem $\ell=10$, $n_x=100$, and $\alpha = 1$.}
			\label{tab:SP conv tol}
		\end{table}

		We now consider the performance of $\mathcal{P}_{SP}$ applied to \eqref{eq:precondsystem}. 
		{The results are the same} for the high and low resource limits considered in Figure \ref{fig:alphaNC1NC2}, unlike for $\mathcal{P}_{NC}$, and we hence only describe the results for the low resource limit. The iterations match those of the `best-case' preconditioner $\mathcal{P}_{\alpha}$ for $\alpha \ge 10^{-4}$ (see Figure \ref{fig:alphaNC1NC2}). However,  \eqref{eq:precondsystem} preconditioned with $\mathcal{P}_{SP}$  does not achieve convergence in a single iteration \aw{for all values of $\alpha$ we tested, however small}.

		\begin{table}[!htb]
			
			\centering
			\begin{tabular}{c|c|c}
				$n_x$ & SP ($\alpha = 1)$ & SP ($\alpha = 0.01)$\\
				\hline
				50 & 9	(90, 1602)&	3	(30,	534)\\
				100&9	(90,	2394)	&2	(20,	508)\\
				200& 8	(80,	2128)	&2	(20,	516)\\
				300& 8	(80,	2147)	&2	(20,	500)\\
				400 & 8	(80,	2172) &2	(20,	516)\\
				500 & 7	(70,	1862)&	2	(20, 508)
			\end{tabular}
			\vspace{4mm}
			\caption{Number of iterations for increasing problem dimension when applying $\mathcal{P}_{SP}$ to \eqref{eq:precondsystem} for $\ell = 10$ with $\eta=0.2$. Brackets show the total number of AMG initializations and the total number of matrix--vector products with $A$. \label{tab:nx_scalingAMG}} 
		\end{table}
		
		\begin{table}
			\centering
			\begin{tabular}{c|c|c}
				$\ell$ & SP ($\alpha = 1)$ & SP ($\alpha = 0.01)$\\
				\hline
				6 &8	(48,	1136) &	2	(12,	252)\\
				10& 9	(90,	2398)	&2	(20,	508)\\
				16& 9	(144, 4032)	&2	(32,	896)\\
				20& 10	(200,	5800)	&2	(40,	1144)\\
				30 & 10	(300,	8876)	&2	(60,	1756)\\
				40&10	(400,	11924)	&2	(80,	2376)\\
				50& 11	(550,	16607)	&3	(150,	4467)
			\end{tabular}
			\caption{Number of  iterations for increasing number of blocks, $\ell$, when applying $\mathcal{P}_{SP}$ to \eqref{eq:precondsystem} for $n_x=100$ and $\eta=0.2$. Brackets show the total number of AMG initializations and the total number of matrix--vector products with $A$.  \label{tab:AMGscalingwithl}}
		\end{table}

		Table \ref{tab:nx_scalingAMG} shows the number of outer iterations, AMG initializations, and matrix--vector products required to reach convergence for increasing block size, $\sizeA$. For $\alpha = 1$ we see a decrease in the number of outer iterations with increasing $\sizeA$, similarly to $\mathcal{P}_{NC}$. For $\alpha=0.01$ we obtain dimension-independent convergence in $2$ iterations for $\ell=10$ and $n_x\ge 100$. The number of inner iterations is more stable with increasing problem size than for $\mathcal{P}_{NC1}$. We also note that the inner iteration counts are much smaller than for $\mathcal{P}_{NC2}$. As the iteration counts are constant, the number of initializations of AMG is also constant with increasing problem dimension. Table \ref{tab:AMGscalingwithl} shows how the performance of the system preconditioned with $\mathcal{P}_{SP}$ scales with increasing number of blocks. For both values of $\alpha$, the number of outer iterations increases slightly with $\ell$, but remains no larger than those required for $\mathcal{P}_{NC2}$.  Similarly to $\mathcal{P}_{NC}$, we see an increase in the number of matrix--vector products with ${A}$ and in the number of initializations of AMG as the number of blocks increases.
		
		\begin{figure}
			\centering
			\includegraphics[width=0.6\textwidth]{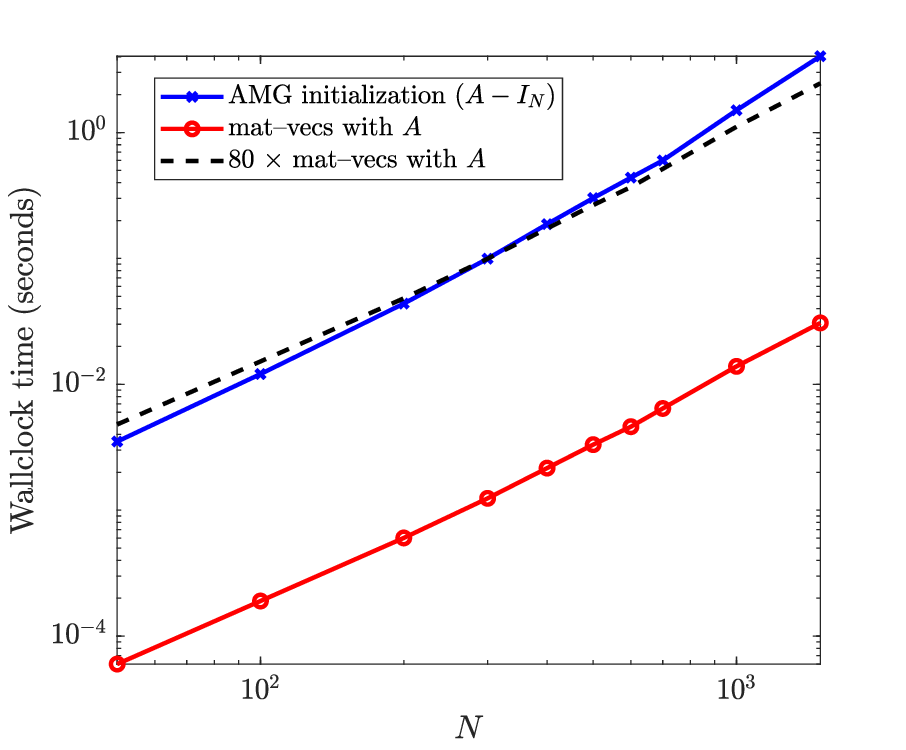}
			\caption{Comparison of wallclock times required for the pre-computation for AMG compared to matrix--vector products for increasing problem dimension. Times are averaged over $50$ realizations.}
			\label{fig:AMGvsMatvec}
		\end{figure}
		In order to study the computational cost of  the preconditioner $\mathcal{P}_{SP}$, we need to account for the cost of initializing AMG. This is likely to be highly dependent on the problem of interest, the implementation of AMG, and the specific computer being used.  
		Figure \ref{fig:AMGvsMatvec} shows how the wallclock time required to initialize the AMG preconditioner for one sub-problem compares to the time needed to compute a matrix--vector product of the same size.  For this experimental framework, the cost of one AMG initialization is approximately the same as 80 matrix--vector products, and this relationship is preserved as we increase the size of the problem being considered. We use this value in the following tests in order to approximately compare the cost of using $\mathcal{P}_{SP}$ to other preconditioners.
		
		\begin{figure}
			\centering
			\includegraphics[width=0.95\textwidth,trim = 20mm 0mm 20mm 0mm, clip]{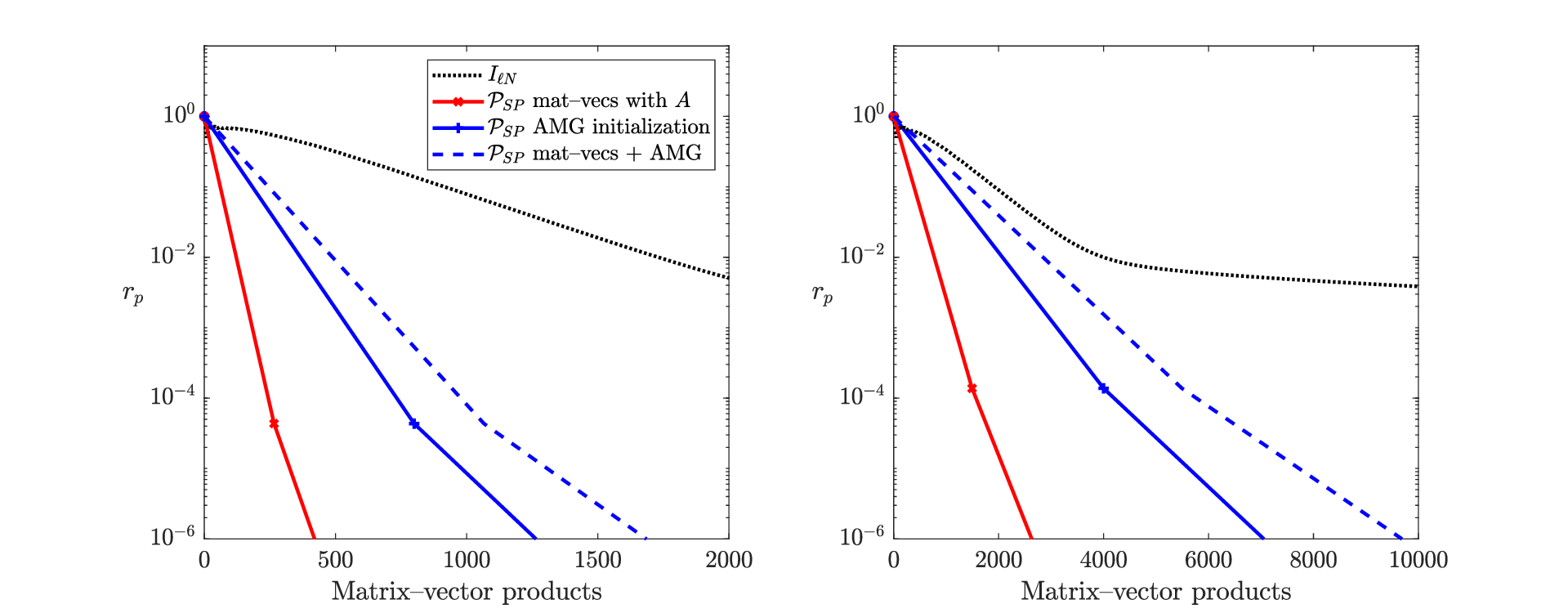}
			\caption{Comparison of the relative residual norm $r_p = \frac{\|b-\mathcal{A}\chi_p\|_2}{\|b\|_2}$ with iterations and matrix--vector products with $A$ for $\ell =10$ (left) and $\ell=50$ (right) using $\mathcal{P}_{SP}$. Matrix--vector products with $A$ are shown in red, with the blue line with crosses showing the number of AMG initializations multiplied by $80$ (in accordance with the results of Figure \ref{fig:AMGvsMatvec}). The blue dashed line shows the total approximate cost in terms of matrix--vector products. Note the difference in the scale of the $x$-axis in on the left and right panels. The full convergence of the unpreconditioned system for this problem is shown in Figure \ref{fig:residual_vs_matvecs}.}
			\label{fig:residual_vs_matvecsSP}
		\end{figure}
		Figure \ref{fig:residual_vs_matvecsSP} shows the decay of the relative residual norm $r_p=\frac{\|b-\mathcal{A}\chi_p\|_2}{\|b\|_2}$ against computational cost in terms of matrix--vector products  when using the preconditioner $\mathcal{P}_{SP}$ for $\ell=10$ (left) and $\ell=50$ (right) when $\alpha = 0.01$. Here we plot the approximate computational cost in terms of matrix--vector products by counting each AMG initialization as 80 matrix--vector products (in accordance with the results of Figure \ref{fig:AMGvsMatvec}).
		We note that for both choices of $\ell$, preconditioning \eqref{eq:precondsystem} with $\mathcal{P}_{SP}$ leads to convergence in two iterations. Even when accounting for the additional cost of AMG initialization, the overall computational cost is much lower than both solving the unpreconditioned problem \eqref{eq:A} or solving \eqref{eq:precondsystem} preconditioned with $\mathcal{P}_{NC2}$ (see Figure \ref{fig:residual_vs_matvecs}). As the number of outer iterations is problem-independent, these performance gains are expected to become even greater with increasing problem size, which we study in the following section.

		\subsection{Comparison of $\mathcal{P}_{NC2}$ and $\mathcal{P}_{SP}$ for high-dimensional problems}

		\begin{table}[]
			
			\centering
			\begin{tabular}{c|cc}
				$n_x$  &  $\mathcal{P}_{NC2}$  &  $\mathcal{P}_{SP}$ \\ \hline
				500  &  7 (--, 7035)& 2 (20, 532)\\
				750  &  7 (--, 10535)& 2 (20, 516)\\
				1000  & 7 (--, 14056)& 2 (20, 516)\\
				1250  & 7 (--, 17535)& 2 (20, 532)\\
				1500  & 6 (--, 18030)& 2 (20, 516) \\
			\end{tabular}
			\caption{Number of outer iterations, AMG initializations, and matrix--vector products with $A$ for increasing problem dimension when applying the preconditioners $\mathcal{P}_{NC2}$ and $\mathcal{P}_{SP}$ for $\ell=10$, $\alpha = 0.01$, and $\resource=0.2$. The dimension of $\mathcal{A}\in\mathbb{R}^{\ell N \times \ell N}$ ranges from $\ell N \in [2.5\times 10^6, 2.25\times 10^7]$.  \label{tab:high_dim}}
			
		\end{table} 
		
		\begin{figure}
			\centering
			\includegraphics[width = 0.6\textwidth]{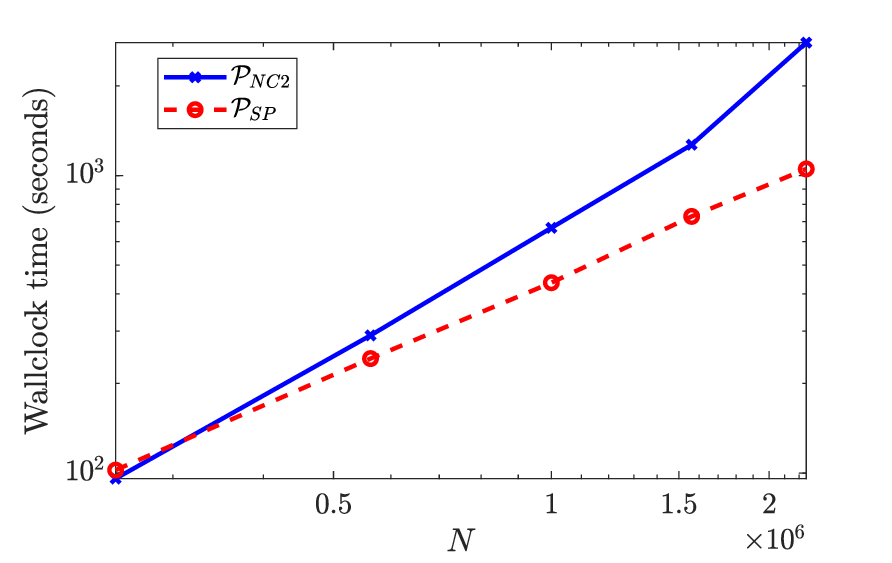}
			\caption{Wallclock times for increasing problem dimension when using the preconditioners $\mathcal{P}_{NC2}$ (red) and $\mathcal{P}_{SP}$ (blue), with $\ell=10$, $\alpha = 0.01$, and $\resource = 0.2$. The dimension of $\mathcal{A}\in\mathbb{R}^{\ell N \times \ell N}$ ranges from $\ell N \in [2.5\times 10^6, 2.25\times 10^7]$.} 
			\label{fig:HighDimTiming}
		\end{figure}
		
		Table \ref{tab:high_dim} shows the performance of the best preconditioners for high-dimensional problems in space: $\mathcal{P}_{NC2}$ and $\mathcal{P}_{SP}$ for $\alpha=0.01$, $\resource = 0.2$, and $\ell=10$. The outer iterations do not increase with problem size for either preconditioner. We note that the number of matrix--vector products required with $A$ is much larger for $\mathcal{P}_{NC2}$ than $\mathcal{P}_{SP}$, and the number of initializations of AMG is kept low due to the small number of outer iterations. Figure \ref{fig:HighDimTiming} shows the wallclock time required to solve the problem with the two preconditioners $\mathcal{P}_{NC2}$ and $\mathcal{P}_{SP}$. For $n_x=500$ (i.e. $\mathcal{A} \in \mathbb{R}^{2,500,000 \times 2,500,000}$), the convergence is faster when using $\mathcal{P}_{NC2}$ than $\mathcal{P}_{SP}$. However, in line with the results of Table \ref{tab:high_dim}, the computational cost of $\mathcal{P}_{NC2}$ scales poorly with increasing problem dimension, and so for very large problems we obtain faster convergence in terms of wallclock time with $\mathcal{P}_{SP}$. We note that these results are implementation-specific, but indicate that if an efficient and reliable AMG implementation can be obtained for the problem of interest, the improved scaling of outer iterations may also yield improved scaling in terms of computational efficiency for large problems.

		\section{Conclusions} \label{sec:Conclusions}
		All-at-once diffusion-based covariance operators are used within ocean data assimilation algorithms, and involve the solution of a linear system with block Toeplitz structure \cite{weaver2018time}. Block $\alpha$-circulant preconditioners are known to be extremely powerful in accelerating the solution of problems of this form. In many cases, implementation of block $\alpha$-circulant preconditioners can be done in a computationally efficient manner, by exploiting fast Fourier transforms to diagonalize the preconditioner.  
		In this paper, we proposed approximations to the block $\alpha$-circulant preconditioner that can  be applied when computing spectral transforms in the spatial dimension is not feasible.
		
		Specifically, we proposed two preconditioning approaches that applied approximations to the block $\alpha$-circulant preconditioner iteratively within an outer Chebyshev semi-iteration (CI). The first approach applied CI to a number of shifted linear systems of the form $A-\lambda_j I_\sizeA$, where $\lambda_j$ is a scaled root of unity. We proved readily computable bounds on the asymptotic convergence factor using only the real part of $\lambda_j$. This permitted the development of a practical extension which ensured {{that}} each sub-problem was solved to approximately the same tolerance, resulting in improved overall convergence. The second approach reformulated a complex shifted linear system to a real-valued saddle point system, which we then solved using preconditioned MINRES.
		
		Numerical results revealed that our approximate preconditioners  are  robust and efficient in terms of outer iterations and matrix--vector products. This performance was independent of the problem dimension for both an increasing block size and an increasing number of blocks, leading to computational gains for high-dimensional problems compared to solving the unpreconditioned system. Unlike the saddle point preconditioned approach, which employs a nonlinear solver (MINRES), the nested Chebyshev approach is completely  linear, which has advantages for representing covariance matrices when the problem is not solved to a high accuracy (see \cite{weaver_correlation_2016,weaver2018time}). For very high-dimensional problems the computational cost of the saddle point preconditioner grew more slowly with the dimension of the blocks than the best nested Chebyshev approach, although this is expected to be strongly dependent on the specific implementation and problem of interest.
		Future work will aim to test the performance of these preconditioners in a realistic data assimilation system such as that of~\cite{Chrust_2025}. 
		
		\jt{We also note that the block $\alpha$-circulant preconditioner can be extended to a problem with similar block structure, with varying diagonal blocks $A_1,A_2,\dots, A_\ell$ using some `representative' or average choice $\widehat{A}$, e.g. following the approach of \cite{Gander2017}. The methods and theory of Sections \ref{sec:ApproxPrecond} and \ref{sec:SP}  apply directly in this setting. However, the eigenvalue bounds on the preconditioned system presented in Section \ref{sec:Background} no longer hold, making the use of this preconditioner within Chebyshev semi-iteration challenging or impossible. }
		
		\section*{Acknowledgements}
		
		JMT and JWP gratefully acknowledge support from the Engineering and Physical Sciences Research Council (EPSRC) UK grant EP/S027785/1.
		
		\appendix
		\section{Eigenspectrum of ${A}$}\label{sec:Append}
		
		Consider the matrix defined in \eqref{IminusLap}. Imposing the Dirichlet boundary conditions for the discretized Laplacian on the unit square, we may use the reasoning of \cite[Lemma 8.1]{Iserles} to deduce that the $n_x^2$ eigenvalues of ${A}$ are given by
		$$
		\mu_{i,j} = 1 + \frac{4 \nu}{h^2} \left( \sin^2 \left ( \frac{i}{2(n_x+1)}\pi \right) + \sin^2 \left ( \frac{j}{2(n_x+1)}\pi  \right ) \right), \quad i, j = 1,2, \ldots, n_x.
		$$
		The minimum and maximum (real and positive) eigenvalues of ${A}$ are
		\begin{align*}
			\mu_{\min} ={}& \mu_{1,1} = 1 + \frac{8 \nu}{h^2} \, \sin^2 \left ( \frac{\pi}{2(n_x+1)} \right )
			> 1, \\
			\mu_{\max} ={}& \mu_{n_x,n_x} = 1 + \frac{8 \nu}{h^2} \, \sin^2 \left ( \frac{n_x\pi}{2(n_x+1)} \right ).
		\end{align*}
		Note that when $n_x$ approaches infinity,
		$\mu_{\max} \approx 1 + \frac{8 \nu}{h^2}$, and $\mu_{\min} \approx 1 + \frac{2 \nu \pi^2}{h^2(n_x +1)^2} = 1 + 2 \nu \pi^2$ since $h = \frac{1}{n_x+1}$.
		
		The associated (normalized) eigenvectors are given by
		$$
		X_{i,j} = V_i \otimes V_j, \quad i, j = 1,2, \ldots, n_x,
		$$
		where
		$$
		V_j = \sqrt{\frac{2}{n_x+1}} \left( \sin\left( \frac{1j\pi}{n_x+1}\right),  \ldots, \sin\left( \frac{n_x j\pi}{n_x+1}\right) \right)^{\top}.
		$$

		\bibliography{bibCirc}
		\bibliographystyle{plain}
	\end{document}